\numberwithin{equation}{section}
\theoremstyle{plain}
        \newtheorem{theorem}[equation]{Theorem}
        \newtheorem{lemma}[equation]{Lemma}
        \newtheorem{proposition}[equation]{Proposition}
        \newtheorem{corollary}[equation]{Corollary}
	    \newtheorem{defi}[equation]{Definition}
        \newtheorem{convention}[equation]{Convention}
\newcommand{\comments}[1]{}
\theoremstyle{definition}
        \newtheorem{remark}[equation]{Remark}
        \newtheorem{example}[equation]{Example}
        \newtheorem{examples}[equation]{Examples}
       \newtheorem{notation}[equation]{Notation}
\newtheorem*{theorem*}{Theorem}
\newcommand{\R}{\mathbb{R}}
\def\xto#1{\xrightarrow{#1}}
\def\HP{\underset{G}{\stackrel{\hspace{3pt} _2}{\mbox{\fontsize{24.88}{30} $\ast$}}}}
\def\2P#1{\underset{#1}{\stackrel{\hspace{4pt} _2}{\mbox{\fontsize{24.88}{30} $\ast$}}}}
\def\SP#1{\underset{#1}{\stackrel{\hspace{4pt} }{\mbox{\fontsize{24.88}{30} $\oplus$}}}}
\def\PR#1{\underset{#1}{\mbox{\fontsize{24.88}{30} $\ast$}}}
\begin{document}
\title[2-nilpotent product of groups]{Permanence properties of the second nilpotent product of groups}

\author{Rom\'an Sasyk}

\address{
Instituto Argentino de Matem\'aticas-CONICET\\
Saavedra 15, Piso 3 (1083), Buenos Aires, Argentina}
\address{
and}
\address{Departamento de Matem\'atica, Facultad de Ciencias Exactas y Naturales, Universidad de Buenos Aires, Argentina}

\email{rsasyk@dm.uba.ar}

\subjclass[2010]{20E22, 20F19, 20F65}

\date{}

\keywords{Second nilpotent product; wreath product; Haagerup property; Kazdhan's property (T); exactness}

\maketitle

\begin{abstract}
We show that amenability,  the Haagerup property,  the Kazhdan's property (T) and exactness
are preserved under taking second nilpotent product of groups.
We also define the restricted second nilpotent wreath product of groups, this is a semi-direct product akin to the restricted wreath product but constructed from the second nilpotent product.
We then show that if two discrete groups have the Haagerup property, 
the restricted second nilpotent wreath product of them also has the Haagerup  property. We finally show that
if a discrete group is abelian, then the restricted second nilpotent wreath product constructed from it is unitarizable if and only if the acting group is amenable.

\end{abstract}
\section{Introduction}

Given a family of groups, the direct sum and the free product provide ways of constructing new groups out of them.
Even though both operations are quite different, they share the next common properties
\begin{enumerate}
\item associativity;\label{1 intro}
\item commutativity;\label{2 intro}
\item the product contains subgroups which generate the product;\label{3 intro}
\item these subgroups are isomorphic to the original groups;\label{4 intro}
\item the intersection of a given one of these subgroups with the normal subgroup generated by the rest of these subgroups is the identity.\label{5 intro}
\end{enumerate}

In the first edition of his classic book ``Theory of Groups'', Kurosh asked if there were other operations on a family of groups that satisfy the above properties. This problem was solved in the affirmative by Golovin in \cite{Gol1}, where he defined countably many such operations. Among these operations, the {\it second nilpotent product} stands out as the simplest to scrutinize. It is defined as follows
\begin{defi}
For a family of groups $\{H_i\}_{i \in \mathcal I}$ indexed on a set $\mathcal I$, the second nilpotent product of the family is the group
$$\2P {i \in \mathcal I} H_i :=  \nicefrac{\PR {i \in \mathcal I} H_i}{\big\langle [\PR {i \in \mathcal I} H_i,[H_j,H_k]]_{j \neq k}\big\rangle}$$
\end{defi}

\vskip .1in
The importance of having such operation resides in that it provides a ``new'' way of constructing groups. Unlike the direct sum, the second nilpotent product of a family of abelian groups does not need to be abelian. On the other hand, the second nilpotent product of finitely many finite groups is necessarily a finite group.

Second nilpotent products of groups have had an interesting application in Mathematical Logic. Indeed, they were used, albeit indirectly, by Mekler in \cite{Mekler} to show that the isomorphism relation of countable groups is model complete for countable structures. This was later framed in the context of Borel Reducibility in the pioneering article of Friedman and Stanley \cite{FS}. Inspired by these results, a variant of a construction of Mekler involving certain semi-direct product of groups related to the restricted wreath product of groups was developed by  Törnquist and the author in \cite{SasTor1} to show 
non-classification results for von Neumann algebras.  
The original motivation of the present article was to further analyze that construction. 
Here we study permanence properties of the second nilpotent product of groups that come from representation theory and dynamics of group actions.
Our first result is
\begin{theorem}[A]\label{theo1}
Amenability, the Haagerup's approximation property, the Kazhdan's property (T) and exactness are preserved under taking second nilpotent product of two countable groups.
\end{theorem}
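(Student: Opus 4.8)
The plan is to reduce all four assertions to a single structural description of the second nilpotent product of two groups $A$ and $B$: it fits into a central extension
$$1 \longrightarrow A^{\mathrm{ab}} \otimes_{\mathbb Z} B^{\mathrm{ab}} \longrightarrow A \2P{} B \longrightarrow A \times B \longrightarrow 1,$$
whose kernel is generated by the images of the mixed commutators $[a,b]$ with $a \in A$, $b \in B$. The defining relations kill $[A \PR{} B,[A,B]]$, so in the quotient these commutators become central and the identities $[aa',b]=[a,b][a',b]$ and $[a,bb']=[a,b][a,b']$ hold; hence $(a,b)\mapsto [a,b]$ is $\mathbb Z$-bilinear and factors through $A^{\mathrm{ab}} \otimes_{\mathbb Z} B^{\mathrm{ab}}$, giving the displayed kernel. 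I would record this first, since each of the four properties is a closure property of a class of groups, and the whole theorem then amounts to checking that this particular central extension stays inside the class whenever $A$ and $B$ (and hence $A \times B$) do.

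For amenability and exactness the verification is immediate. Both classes are closed under extensions $1 \to N \to G \to Q \to 1$ and under finite direct products. The kernel $A^{\mathrm{ab}} \otimes_{\mathbb Z} B^{\mathrm{ab}}$ is abelian, so it is amenable and exact, while $A \times B$ inherits amenability (respectively exactness) from $A$ and $B$. Invoking the closure of amenability under extensions and the Kirchberg--Wassermann theorem that exactness is closed under extensions settles these two cases at once.

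Property (T) needs one extra remark and is then just as short. If $A$ and $B$ have property (T) then so does $A \times B$; moreover a group with property (T) has finite abelianization, so $A^{\mathrm{ab}}$ and $B^{\mathrm{ab}}$ are finite and the central kernel $A^{\mathrm{ab}} \otimes_{\mathbb Z} B^{\mathrm{ab}}$ collapses to a finite group. Thus $A \2P{} B$ is an extension of the property-(T) group $A \times B$ by a finite group, and since property (T) is stable under extensions with finite kernel, $A \2P{} B$ has property (T). The point that makes this case cheap is precisely that the kernel, abelian in general, becomes finite in the presence of (T).

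The Haagerup property is the genuinely delicate case and, I expect, the main obstacle, because the Haagerup property is \emph{not} closed under arbitrary extensions with abelian kernel: the standard example $\mathbb Z^2 \rtimes \SL_2(\mathbb Z)$ has an abelian normal subgroup and a Haagerup quotient yet fails to be Haagerup through relative property (T). What rescues the present situation is that our extension is \emph{central}, so the conjugation action on the kernel is trivial and the relative-(T) obstruction cannot occur. Concretely I would build a proper conditionally negative definite function on $A \2P{} B$ as a sum of two pieces. The first is the inflation $\psi \circ \pi$ of a proper conditionally negative definite function $\psi$ on the Haagerup group $A \times B$; it is conditionally negative definite and proper ``in the $A \times B$ direction'' but is constant on cosets of the center $Z = A^{\mathrm{ab}} \otimes_{\mathbb Z} B^{\mathrm{ab}}$. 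The second must be a conditionally negative definite function on $A \2P{} B$ whose restriction to $Z$ is proper; a triangle-inequality estimate for the associated cocycle then shows that the sum is proper on all of $A \2P{} B$, and Schoenberg's theorem converts it into the required approximation of the identity by $C_0$ positive definite functions. The technical heart, and the step I expect to fight with, is the construction of this second function, that is, the extension of a proper conditionally negative definite function from the central abelian subgroup $Z$ to the whole group; this is exactly the stability of the Haagerup property under central extensions by a countable abelian group, and it is here that both the centrality and the (automatic) Haagerup property of the abelian kernel $Z$ are used.
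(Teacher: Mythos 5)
Your structural setup (the central extension $1 \to A^{\mathrm{ab}}\otimes_{\mathbb Z} B^{\mathrm{ab}} \to A \stackrel{_2}{\ast} B \to A\times B \to 1$) is exactly the paper's starting point, and your treatments of amenability, exactness and property (T) are correct and coincide with the paper's proof: abelian (respectively, finite, in the (T) case) central kernel plus closure of the relevant class under extensions. The genuine gap is in the Haagerup case, at precisely the step you flag as the technical heart. You reduce the claim to ``stability of the Haagerup property under central extensions by a countable abelian group.'' That is not a known theorem you can invoke: it is a well-known open problem (raised already in the Cherix--Cowling--Jolissaint--Julg--Valette book, where it is asked whether a central extension of a Haagerup group even by $\mathbb Z$ must be Haagerup). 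Centrality does rule out the relative property (T) obstruction you correctly identify, but it does not by itself produce the object your argument needs, namely a conditionally negative definite function on the whole group whose restriction to the central subgroup $Z$ is proper; there is no general procedure for manufacturing such a function from the Haagerup property of $G/Z$ and of $Z$ alone. So your proof is incomplete exactly there, and it cannot be completed by citation.

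The paper escapes this by flipping the extension. Instead of presenting $G = A \stackrel{_2}{\ast} B$ as a central extension with Haagerup \emph{quotient} (the problematic direction), it presents $G$ as an extension $1 \to [G,G] \to G \to G^{\mathrm{ab}} \to 1$ whose \emph{kernel} is Haagerup and whose \emph{quotient} is amenable: by Proposition \ref{[G,G]}, $[G,G]\cong [A,A]\oplus[B,B]\oplus [A,B]^{(2)}$, a direct sum of two subgroups of Haagerup groups and a countable abelian group, hence Haagerup; and $G^{\mathrm{ab}}$ is abelian. Stability of the Haagerup property under extensions with amenable quotient \emph{is} a theorem (\cite[Example 6.1.6]{CCJJV}), and it finishes the proof. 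This is also the way to salvage your two-function scheme: the proper conditionally negative definite function ``in the $Z$ direction'' that you are missing comes from the Haagerup property of $[G,G]$ (in which $Z$ sits as a direct summand), propagated to all of $G$ by the amenable-quotient theorem, not from any general fact about central extensions.
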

This, together with the associativity of the second nilpotent product, will allow us to prove the next corollary. 

\begin{corollary}[B]
\label{amenability of many products}
If $\{H_i\}_{i\in\mathcal I}$ is a countable family of discrete amenable (respectively Haagerup, resp. exact) groups, the group $\2P {i\in\mathcal I} H_i$ is
amenable (resp. Haagerup, resp. exact).
\end{corollary}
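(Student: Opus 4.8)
The plan is to prove Corollary B by bootstrapping from Theorem A using the associativity stated for the second nilpotent product, together with the fact that each of these four properties (amenability, Haagerup, exactness) passes to inductive limits of groups — more precisely, to increasing unions of subgroups.

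First I would dispose of the finite case. If $\mathcal I$ is finite, say $\mathcal I = \{1,\dots,n\}$, then by associativity we may write $\2P {i\in\mathcal I} H_i$ as an iterated two-fold nilpotent product, for instance $\2P {i=1}{n} H_i \cong H_1 \2P{} (H_2 \2P{} (\cdots \2P{} H_n))$. A straightforward induction on $n$, with Theorem~A supplying each inductive step (the two-factor case), shows that the property in question is inherited by the finite nilpotent product. The base case $n=1$ is trivial since $\2P{} H_1 \cong H_1$.

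For the general countable case, the key observation is that the second nilpotent product is compatible with directed unions. Writing $\mathcal I = \{i_1, i_2, \dots\}$ and setting $G_n := \2P {k=1}{n} H_{i_k}$, I expect there is a natural embedding $G_n \mono G_{n+1}$ (coming from property~\eqref{3 intro}--\eqref{4 intro} in the introduction, which guarantee each factor embeds into the product), so that $\2P {i\in\mathcal I} H_i = \bigcup_{n} G_n$ is an increasing union of the $G_n$. Each $G_n$ is a finite nilpotent product and hence has the relevant property by the finite case just established. One then invokes the standard permanence of amenability, the Haagerup property, and exactness under increasing unions of subgroups: an increasing union of amenable (resp.\ Haagerup, resp.\ exact) groups is again amenable (resp.\ Haagerup, resp.\ exact).

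The main obstacle I anticipate is verifying carefully that $\2P {i\in\mathcal I} H_i$ genuinely is the directed union $\bigcup_n G_n$ — that is, that the infinite nilpotent product is the inductive limit of its finite sub-nilpotent-products with the expected transition maps being injective. This requires unwinding the definition (the quotient of the free product by the normal closure of the length-three commutator relations among distinct factors) and checking that the defining relations of $G_n$ are exactly the relations of $\2P {i\in\mathcal I} H_i$ involving only the first $n$ factors, so that the colimit of the $G_n$ maps isomorphically onto the full product. Note that property~(T) is deliberately omitted from the statement of Corollary~B, precisely because it is \emph{not} preserved under infinite increasing unions in general, so no claim about (T) is made for infinite $\mathcal I$; this is consistent with only the other three properties being listed.
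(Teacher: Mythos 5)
Your proposal is correct and takes essentially the same route as the paper: the finite case via associativity (Proposition \ref{assoc}) combined with Theorem A, and the countable case by writing $\2P {i\in\mathbb N} H_i=\bigcup_{n\in\mathbb N} \2P {i\in\{1,\ldots,n\}} H_i$ and invoking the permanence of amenability, the Haagerup property and exactness under countable increasing unions (with property (T) excluded for exactly the reason you state). The union identification you flag as the main obstacle is already supplied by the paper's machinery: the natural map $\2P {i\in\{1,\ldots,n\}} H_i \to \2P {i\in\mathbb N} H_i$ is injective because the projection $\pi_{\{1,\ldots,n\}}$ is a left inverse to it, and the normal form of Proposition \ref{escritura} shows that every element lies in such a finite sub-product.
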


In geometric and measurable group theory, restricted wreath products have been playing a significant role in producing examples of groups that verify or disprove long standing conjectures. Since the second nilpotent product of groups allows a construction similar to the wreath product, that we shall call
{\it restricted second nilpotent wreath product}, (see section \ref{sec5}), it is conceivable that  this new construction could be used in a similar manner. To illustrate this, we present the following adaptation of a theorem of Cornulier, Stalder \& Valette that appeared in \cite{CSV1,CSV2}.

\begin{theorem}[C]\label{theo2}
Let $G$ and $H$ be countable groups with the Haagerup property. Then the restricted second nilpotent wreath product 
$$\Big (\2P { G}H\Big )\rtimes G$$
 has the Haagerup property.
\end{theorem}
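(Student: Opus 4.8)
The plan is to produce a proper conditionally negative definite (cnd) function on $W:=(\2P{G}H)\rtimes G$; by the Schoenberg correspondence and the usual characterisations of the Haagerup property this is equivalent to the assertion of the theorem. Write $N:=\2P{G}H$. I would begin by recording the structural description of $N$ on which everything rests: the canonical map collapsing all cross‑commutators exhibits $N$ as a central extension
$$ 1 \longrightarrow Z \longrightarrow N \longrightarrow \bigoplus_{g\in G} H \longrightarrow 1, $$
where $Z$, the subgroup generated by the commutators $[h_g,h'_{g'}]$ with $g\neq g'$, is abelian and isomorphic to $\bigoplus_{g<g'} H^{\mathrm{ab}}\otimes_{\mathbb Z} H^{\mathrm{ab}}$, and $G$ acts throughout by shifting indices. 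Consequently $W$ is an extension $1\to Z\to W\to H\wr G\to 1$ of the ordinary restricted wreath product $H\wr G=(\bigoplus_G H)\rtimes G$ by the abelian group $Z$, and the conjugation action of $W$ on $Z$ factors through $W\to G$. I would then build the desired cnd function as a sum of two ingredients, one adapted to each of these two pieces (a finite sum of cnd functions is again cnd), and control properness layer by layer.

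The first ingredient detects the quotient $H\wr G=W/Z$, and here lies the \emph{main obstacle}. Starting from proper cnd functions on $G$ and on $H$ (which exist since both are Haagerup), the naive sum $\sum_g \psi_H(h_g)+\psi_G(s)$ fails to be proper: a single lamp $h_0$ placed at a far position $g$ is $W$‑conjugate, via $g\in G\le W$, to the same lamp at the origin, so any shift‑invariant lamp function stays bounded on these elements. This is exactly the difficulty overcome by Cornulier–Stalder–Valette, who build a proper action of the wreath product on a space with measured walls (equivalently, a proper cnd function) by entangling the lamp states with the wall/affine structure of $G$, so that the long excursions needed to reach distant lamps are detected. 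Because $G$ acts on itself freely and metrically properly, their hypotheses hold, and pulling back along $W\to H\wr G$ gives a cnd function on $W$ that is proper modulo $Z$.

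The second ingredient must detect the central group $Z$, and this is the genuinely new point. Since $Z\subseteq[N,N]$ maps to zero in every abelian quotient of $N$, no homomorphism of $W$ into an abelian group sees $Z$; equivalently, in any affine action detecting $Z$ the lamp generators must act by nontrivial orthogonal transformations, the relevant element of $Z$ being read off as the symplectic area spanned by the (noncommuting) cocycle images of the two lamp generators. This is precisely the Heisenberg‑type mechanism that already makes $\2P{2}H$ Haagerup in Theorem~\ref{theo1}, and I would reuse that construction; the new feature is that it must be spread over $G$. Indeed $Z\cong A^{(X_2)}$ with $A=H^{\mathrm{ab}}\otimes H^{\mathrm{ab}}$ and $X_2$ the $G$‑set of distinct pairs $(g,g')$, on which $G$ acts freely with $X_2\cong G\times(G\setminus\{e\})$; a generator of $Z$ at a far pair is again conjugate to one at a near pair, so the CSV spreading must be applied a second time, now over $X_2$, to render the $Z$‑contribution proper. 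Combining the commutator‑realising Heisenberg cocycle with measured walls over $X_2$ should yield a cnd function on $W$ that is proper in the $Z$‑direction.

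Adding the two cnd functions finishes the argument: bounding the first confines the image in $H\wr G$ to a finite set, and bounding the second then confines the remaining $Z$‑coordinate, so every sublevel set of the sum is finite and the function is proper. I expect the real work to lie in the second ingredient, where the Heisenberg mechanism and the CSV spreading interact: one must check that the commutator‑realising cocycle, once distributed over the infinitely many pairs permuted by $G$, still assembles into a genuine orthogonal representation of all of $W$ (note that $Z\rtimes G$ is \emph{not} a quotient of $W$, since any homomorphism $N\to Z$ kills $Z$), and that the resulting cocycle is well defined and proper. Verifying the cocycle identity and the summability placing the vectors in the Hilbert space—rather than any single conceptual step—is where the effort concentrates.
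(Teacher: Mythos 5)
Your overall architecture --- split $W=\Big(\2P{G}H\Big)\rtimes G$ into the central kernel $Z\cong\bigoplus_{g<g'}\nicefrac{H}{[H,H]}\otimes\nicefrac{H}{[H,H]}$ and the quotient $H\wr G$, pull back a proper conditionally negative definite function from $H\wr G$ via \cite{CSV2}, and then add a second conditionally negative definite function on $W$ that is proper on $Z$ --- is coherent as a plan, and the combination step at the end is correct (modulo the standard triangle inequality for cocycles). The genuine gap is that the second ingredient, which you yourself identify as ``the genuinely new point'' and ``where the real work lies,'' is never constructed: you only assert that a ``commutator-realising Heisenberg cocycle'' combined with ``CSV spreading over $X_2$'' \emph{should} produce a conditionally negative definite function on all of $W$ whose restriction to $Z$ is proper, and you defer exactly the issues (that the pieces assemble into an orthogonal representation of $W$, the cocycle identity, summability, properness) that constitute the proof. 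Since $Z$ is an infinite-rank central subgroup contained in $[N,N]$, invisible to every abelian quotient, and permuted by $G$ so that far-away generators are conjugate to nearby ones, detecting it properly is precisely a CSV-level difficulty; there is no off-the-shelf statement to cite for it, and the existence of such a function is essentially equivalent to the theorem being proved. So what you have is an accurate description of the obstacle, not a resolution of it.

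It is instructive to compare with how the paper sidesteps this entirely. The paper does not seek a globally proper conditionally negative definite function on $W$; instead it invokes the gauge criterion \cite[Theorem 5.1]{CSV2} applied to $\2P{G}H$ with the support gauge of Definition \ref{defsop}, so that one only needs a $G$-invariant conditionally negative definite function $u$ on $\2P{G}H$ whose restriction to each set $\{x: supp(x)\subset F\}$, $F\subset G$ finite, is proper. Such a $u$ is then cheap to build: Theorem A (Proposition \ref{StabProp}) gives a proper conditionally negative definite function $\varphi$ on $H\stackrel{_2}{\ast}H$, and one sets $u=\sum_{h\neq k}2^{-\Psi(h^{-1}k)}\,\varphi\circ\pi_{(h,k)}$ with $\Psi$ an enumeration of $G$. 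The point your plan is missing, and which this construction gets for free, is that the two-index projections $\pi_{(h,k)}$ already see the central coordinates $[H_h,H_k]^{(2)}$, and by Proposition \ref{escritura} an element supported in $F$ is determined by its projections to pairs from $F$; hence the ``lamp'' directions and the $Z$-directions are handled by one and the same sum, and no separate Heisenberg-type ingredient is needed. To turn your proposal into a proof you would either have to carry out the $X_2$-spreading construction in full detail, or reorganize it along these lines.
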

  In order to  explain our  last result, first 
 recall that a group $G$ is unitarizable if for every uniformly bounded representation $\pi$ of $G$ on a Hilbert space $\mathcal H$, there exists $T \in \mathcal B(\mathcal H)$ such that $T\pi T^{-1}$ is a unitary representation. Dixmier showed that amenable groups are always unitarizable, \cite{Dix}. The Dixmier problem asks whether the converse holds true (see, for instance, \cite{MR2195457}).
 In \cite{MoOz}, Monod \& Ozawa showed that wreath products of the form  $\Big(\SP {G}A\Big )\rtimes G$ with $A$ abelian, are unitarizable if and only if $G$ is amenable. This provided the first examples of non unitarizable groups not containing a copy of $\mathbb F_2$.   
Observe that a consequence of Corollary \ref {amenability of many products} 
is that if  a group $A$ is abelian,
 the group $\Big (\2P {G}A \Big )\rtimes G$
  is amenable if and only if $G$ is amenable.
The construction of the restricted second nilpotent product combined with  \cite[Theorem 1]{MoOz} will yield the following result.
\begin{theorem}[D]\label{theo3}
Let $A$ and $G$ be countable groups, with $A$ abelian. The restricted second nilpotent wreath product $\Big (\2P {G}A\Big )\rtimes G$ is unitarizable if and only if $G$ is amenable.
\end{theorem}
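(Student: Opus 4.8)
The plan is to prove the two implications separately, reducing the nontrivial direction to the theorem of Monod and Ozawa via the abelianization of the base group. For the ``if'' direction I would argue that amenability of $G$ forces the whole group to be amenable and then invoke Dixmier's theorem. For the converse, I would produce a $G$-equivariant surjection from the base $\2P {G}A$ onto the restricted direct sum $\SP {G}A$, push it forward to a surjection of the two wreath-type constructions, and use that unitarizability is inherited by quotients in order to land on exactly the group treated in \cite[Theorem 1]{MoOz}.

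For the ``if'' direction I would note that, since $A$ is abelian it is amenable, and since $G$ is countable, Corollary \ref{amenability of many products} shows that $\2P {G}A$ is amenable. Hence $\Big(\2P {G}A\Big)\rtimes G$ is an extension of the amenable group $G$ by the amenable group $\2P {G}A$, so it is amenable, and by Dixmier \cite{Dix} it is unitarizable.

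For the ``only if'' direction, the key observation is that abelianizing the base recovers precisely the Monod--Ozawa base. Writing $N:=\2P {G}A$, one has $N/[N,N]\cong \SP {G}A$: indeed $N$ is a quotient of the free product $\PR {g\in G}A$ by relations lying in the commutator subgroup, so $N$ and $\PR {g\in G}A$ have the same abelianization, namely $\SP {g\in G}A$ because $A$ is abelian. Since $[N,N]$ is characteristic in $N$ and $N$ is normal in the wreath product, $[N,N]$ is normal in $\Big(\2P {G}A\Big)\rtimes G$; moreover the shift action of $G$ on $N$ descends to the shift action on $N/[N,N]$, so quotienting yields a surjection
$$\Big(\2P {G}A\Big)\rtimes G \twoheadrightarrow \Big(\SP {G}A\Big)\rtimes G.$$
Next I would record that unitarizability passes to quotients: a uniformly bounded representation of a quotient pulls back along the quotient map to a uniformly bounded representation of the same bound, and a conjugating operator for the pullback conjugates the original representation to a unitary one because the quotient map is onto. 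Applying this to the surjection above, unitarizability of $\Big(\2P {G}A\Big)\rtimes G$ forces unitarizability of $\Big(\SP {G}A\Big)\rtimes G$, whence $G$ is amenable by \cite[Theorem 1]{MoOz}.

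The main point to get right---rather than a genuine obstacle---is the identification $N/[N,N]\cong\SP {G}A$ together with the compatibility of the two $G$-actions, since it is this compatibility that guarantees the quotient is again a wreath-type semidirect product landing exactly on the group handled by Monod and Ozawa. Everything else is soft: the amenability input is Corollary \ref{amenability of many products}, and the descent of unitarizability to quotients is a standard and elementary fact.
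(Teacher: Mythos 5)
Your proposal is correct and takes essentially the same route as the paper: the paper quotients $\Big(\2P {G}A\Big)\rtimes G$ by the central subgroup $C=\bigoplus_{i<j}[A_{g_i},A_{g_j}]^{(2)}$, which for abelian $A$ is exactly your $[N,N]$ (by Proposition \ref{[G,G]} and Proposition \ref{sumoftensors}), and then invokes \cite[Theorem 1]{MoOz} together with the fact that quotients of unitarizable groups are unitarizable, just as you do. The ``if'' direction is likewise identical (Corollary \ref{amenability of many products} plus Dixmier), so the only cosmetic difference is that you justify the isomorphism $N/[N,N]\cong\SP{G}A$ via the abelianization of the free product rather than via the paper's explicit description of the kernel.
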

 We will see that for an abelian group $A\neq \{1\}$,  the group  $\2P { G}A $ is never abelian (unless  $G=\{1\}$), and that  $\2P { G}\nicefrac{\mathbb Z}{p\mathbb Z}$ is a nil-2 p-group. Hence, our examples are apparently different from the ones that have already appeared in the
literature.
\vskip .2in

To the best of our knowledge, the papers of Golovin have not been studied much in recent years. A partial indication of this is the scarce number of citations they received.  Thus, another goal of this article is to bring back the second nilpotent product as a useful construction of groups.

\vskip .1in

\section{The second nilpotent product of two groups}

We record several properties of the second nilpotent product of two groups. Some of the results of this section can be found (at least implicitly) in the papers of Golovin, \cite{Gol1,Gol2}. Since that articles are not widely available and the proofs given there are somewhat obscured by lack of modern terminology, we opted to provide short proofs of them.
\begin{convention}
In this article we adopt the convention $$[a,b]=aba^{-1}b^{-1}.$$
\end{convention}

\begin{defi}
Given two groups $A$ and $B$, their second nilpotent product is the group

$$A \stackrel{_2}{\ast} B:= \nicefrac{A \ast B}{[A \ast B,[A,B]]}$$
\end{defi}

In words, this means that we start from the free product and declare that all the commutators
$[a,b]$ with $a \in A$ and $b \in B$ are central. Historically, the construction of the second nilpotent product of exactly two groups first appeared in a paper of Levi, \cite{Levi}. Golovin, unaware of Levi's work, treated the general case in \cite{Gol1,Gol2} to solve the problem of Kurosh mentioned in the introduction.

\begin{notation} We denote with $[A,B]^{(2)}$ the subgroup of $A \stackrel{_2}{\ast} B$ generated by the commutators of the form $[a,b]$, with $a \in A$ and
$b \in B$.
\end{notation}

\begin{proposition} \label{abc}
 An element $x \in A \stackrel{_2}{\ast} B$ admits a unique representation $abc$ where $a \in A$, \mbox{$b \in B$}
 and  $c \in [A,B]^{(2)}$.
\end{proposition}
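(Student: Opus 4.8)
The plan is to treat existence and uniqueness separately, using throughout that $[A,B]^{(2)}$ is central in $A \stackrel{_2}{\ast} B$ by the defining relations, together with the swapping identity $ba = ab\,[a,b]^{-1}$ (valid for $a \in A$, $b \in B$ because $[a,b]^{-1}$ is central). For existence, rather than inducting on word length I would introduce the set $S = \{\,abc : a \in A,\ b \in B,\ c \in [A,B]^{(2)}\,\}$ and show it is a subgroup. Since $A \subseteq S$ (take $b=c=1$), $B \subseteq S$ (take $a=c=1$), and $A, B$ generate $A \stackrel{_2}{\ast} B$, this forces $S = A \stackrel{_2}{\ast} B$, giving the desired decomposition for every element.

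To check that $S$ is a subgroup, I would compute a product $(abc)(a'b'c')$: pulling the central factors $c,c'$ to the right and then using the swap to move $a'$ left past $b$ yields $(aa')(bb')\,[a',b]^{-1}cc'$, where $aa' \in A$, $bb' \in B$, and $[a',b]^{-1}cc' \in [A,B]^{(2)}$, so the product lies in $S$. Closure under inverses is the same bookkeeping: $(abc)^{-1} = c^{-1}b^{-1}a^{-1} = b^{-1}a^{-1}c^{-1}$, and swapping $b^{-1}$ past $a^{-1}$ puts this into the form $a^{-1}b^{-1}\,[a^{-1},b^{-1}]^{-1}c^{-1} \in S$. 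Together with $1 = 1\cdot 1 \cdot 1 \in S$, this establishes existence.

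For uniqueness, the key device is the canonical epimorphism $\phi : A \stackrel{_2}{\ast} B \to A \times B$ induced by the two projections of $A \ast B$ onto $A$ and $B$ (equivalently, by further quotienting out the central, hence normal, subgroup $[A,B]^{(2)}$). Every generator maps to $\phi([a,b]) = [(a,1),(1,b)] = 1$ in the direct product, so $[A,B]^{(2)} \subseteq \ker\phi$ and therefore $\phi(abc) = (a,b)$. Hence if $abc = a'b'c'$, applying $\phi$ gives $(a,b) = (a',b')$; since the coordinate inclusions $A \hookrightarrow A \times B$ and $B \hookrightarrow A \times B$ are injective, this forces $a = a'$ and $b = b'$, and then $c = b^{-1}a^{-1}x = c'$. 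As a byproduct this shows $A$ and $B$ embed in $A \stackrel{_2}{\ast} B$, so that the three components are genuinely well-defined group elements.

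The only conceptual step is spotting the projection $\phi$ onto $A \times B$, which simultaneously reads off the $A$- and $B$-coordinates and trivializes $[A,B]^{(2)}$; I expect no real obstacle beyond it. The points needing care are purely routine: confirming that every commutator collected during the swaps is of the form $[a,b]^{\pm 1}$ with $a \in A$, $b \in B$ (hence a generator of $[A,B]^{(2)}$), and that $[A,B]^{(2)}$ is indeed central — both of which are immediate from the defining relations of $A \stackrel{_2}{\ast} B$.
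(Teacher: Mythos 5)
Your proposal is correct and follows essentially the same route as the paper: existence rests on the same swap identity $ba=[a,b]^{-1}ab$ plus centrality of $[A,B]^{(2)}$ (you package the word-rewriting as a ``the set $S$ of normal forms is a subgroup containing the generators'' argument, which is only a cosmetic variant of the paper's induction), and uniqueness uses the projection onto $A\times B$, which is exactly the paper's pair of projections $\pi_A,\pi_B$ combined into one map. The only added value is that you make explicit the (easy) byproduct that $A$ and $B$ embed into $A \stackrel{_2}{\ast} B$, which the paper leaves implicit.
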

\begin{proof}
 The element $x$ can be represented as a word with letters in $A$ and $B$
 $$x=a_1 b_1 a_2 b_2 ... a_n b_n$$
 The identity $ba=[b,a]ab$ allows us to take all the $a_i$ to the left. Having in mind that
 $[b_j, a_i]$ are central  in $A \stackrel{_2}{\ast} B$, we obtain $x=a_1...a_nb_1...b_nc$ with \mbox{$c \in [A,B]^{(2)}$}.
 Uniqueness follows easily by applying the projections \mbox{$A \stackrel{_2}{\ast} B \xto{\pi_A} A$} and
 $A \stackrel{_2}{\ast} B \xto{\pi_B} B$.
\end{proof}
\begin{corollary} We have the following exact sequence
\begin{equation}\label{SESeqa}
1 \to [A,B]^{(2)} \to A \stackrel{_2}{\ast} B \to A \oplus B \to 1.\\
\end{equation}

\end{corollary}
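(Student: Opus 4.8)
The plan is to produce an explicit surjective homomorphism $\phi \colon A \stackrel{_2}{\ast} B \to A \oplus B$ whose kernel is exactly $[A,B]^{(2)}$; once this is done, exactness of \eqref{SESeqa} is immediate, since a kernel is automatically normal and the inclusion $[A,B]^{(2)} \hookrightarrow A \stackrel{_2}{\ast} B$ is injective by construction.

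First I would build $\phi$ via the universal property of the free product. The inclusions $A \hookrightarrow A \oplus B$ and $B \hookrightarrow A \oplus B$ induce a homomorphism $A \ast B \to A \oplus B$. Since $A$ and $B$ commute inside $A \oplus B$, every commutator $[a,b]$ with $a \in A$, $b \in B$ is sent to the identity; in particular the relations $[A \ast B,[A,B]]$ that define $A \stackrel{_2}{\ast} B$ are killed, so the map descends to a homomorphism $\phi \colon A \stackrel{_2}{\ast} B \to A \oplus B$. Surjectivity is clear, because the images of $A$ and $B$ already generate $A \oplus B$.

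The core of the argument is the computation of $\ker \phi$, and here Proposition \ref{abc} does all the work. On the one hand $[A,B]^{(2)} \subseteq \ker \phi$, since $\phi$ sends each generating commutator $[a,b]$ to $[\phi(a),\phi(b)] = 1$ in the abelian group $A \oplus B$. For the reverse inclusion, take $x \in \ker \phi$ and write it in its unique normal form $x = abc$ with $a \in A$, $b \in B$, $c \in [A,B]^{(2)}$, as guaranteed by Proposition \ref{abc}. Applying $\phi$ and using $\phi(a) = (a,1)$, $\phi(b) = (1,b)$, $\phi(c) = (1,1)$ gives $\phi(x) = (a,b)$, so $\phi(x) = 1$ forces $a = 1$ and $b = 1$, whence $x = c \in [A,B]^{(2)}$.

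I do not anticipate any serious obstacle: once Proposition \ref{abc} is in hand, the identification of the kernel is forced by uniqueness of the normal form, and the only point requiring care is to invoke the universal property of the free product correctly, so that $\phi$ is genuinely well defined on the quotient $A \stackrel{_2}{\ast} B$ rather than merely on $A \ast B$.
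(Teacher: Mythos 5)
Your proof is correct and is essentially the argument the paper leaves implicit: the corollary is stated as an immediate consequence of Proposition \ref{abc}, whose normal form $x=abc$ (together with the projections $\pi_A$, $\pi_B$ already used there) identifies the kernel of the natural map onto $A\oplus B$ exactly as you do. Your only addition is to spell out the well-definedness of $\phi$ via the universal property of the free product, which is a harmless elaboration of the same route.
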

\begin{proposition} \label{morfismo}
 $B \xto{[a,-]} [A,B]^{(2)}$ is a group homomorphism for every element $a \in A$.
\end{proposition}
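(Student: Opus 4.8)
The goal is to verify that the assignment $b \mapsto [a,b]$ respects the group operation of $B$, that is, that $[a, b_1 b_2] = [a, b_1]\,[a, b_2]$ for all $b_1, b_2 \in B$, with $a \in A$ fixed. The plan is to begin from the elementary commutator expansion identity that holds in any group, and then exploit the defining feature of the second nilpotent product, namely that every commutator $[a,b]$ is central.

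First I would record the purely formal identity, valid in an arbitrary group with the convention $[a,b] = aba^{-1}b^{-1}$,
$$[a, b_1 b_2] = [a, b_1]\cdot b_1 [a, b_2] b_1^{-1}.$$
This is checked by direct expansion: the right-hand side equals $a b_1 a^{-1} b_1^{-1} \cdot b_1 (a b_2 a^{-1} b_2^{-1}) b_1^{-1} = a b_1 b_2 a^{-1} b_2^{-1} b_1^{-1}$, which is precisely $[a, b_1 b_2]$.

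The second and decisive step is to observe that $[a, b_2] \in [A,B]^{(2)}$, and that by the very construction of $A \stackrel{_2}{\ast} B$ as the quotient of $A \ast B$ by $[A\ast B,[A,B]]$, every such commutator is central. Hence conjugation by $b_1$ fixes it, so $b_1 [a, b_2] b_1^{-1} = [a, b_2]$. Substituting this into the identity above collapses the twisted product to $[a, b_1 b_2] = [a, b_1]\,[a, b_2]$, which is exactly the homomorphism property.

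There is essentially no obstacle here beyond bookkeeping with the chosen commutator convention; the entire content of the statement is that the centrality of the commutators turns the ``twisted'' multiplicativity of $b \mapsto [a,b]$, valid already in the free product $A \ast B$, into genuine multiplicativity in the nilpotent quotient. The only point requiring care is that the conjugation term, which in a general group is precisely what obstructs $b \mapsto [a,b]$ from being a homomorphism, is annihilated exactly because we have passed to $A \stackrel{_2}{\ast} B$.
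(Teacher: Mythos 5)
Your proof is correct and follows exactly the route taken in the paper: establish the identity $[a,b_1b_2]=[a,b_1]\,b_1[a,b_2]b_1^{-1}$ in the free product, then use the centrality of $[a,b_2]$ in $A \stackrel{_2}{\ast} B$ to eliminate the conjugation. The only difference is that you spell out the expansion and the centrality argument in more detail than the paper does.
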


 \begin{proof}
  Let $b, c \in B$. The identity $[a,bc]=[a,b]b[a,c]b^{-1}$ holds true in the free product $A{\ast} B$.
It follows that  $[a,bc]=[a,b][a,c]$ in  $A \stackrel{_2}{\ast} B$.
\end{proof}
\begin{corollary}\label{commuting}  Let $a\in A$ and $b, c \in B$. Then $[a,bc]=[a,cb]$.
\end{corollary}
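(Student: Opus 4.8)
The plan is to deduce the identity directly from Proposition \ref{morfismo} combined with the centrality of commutators that is built into the very definition of the second nilpotent product. First I would apply Proposition \ref{morfismo} to each side: since $B \xto{[a,-]} [A,B]^{(2)}$ is a group homomorphism, it carries the product $bc$ to $[a,b][a,c]$ and the product $cb$ to $[a,c][a,b]$. Consequently the asserted equality $[a,bc]=[a,cb]$ is equivalent to showing that the two elements $[a,b]$ and $[a,c]$ commute inside $A \stackrel{_2}{\ast} B$.

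To see that they do, I would invoke the defining relation of the construction. By definition $A \stackrel{_2}{\ast} B$ is the quotient of the free product $A \ast B$ by $[A \ast B,[A,B]]$, so every commutator $[a',b']$ with $a'\in A$ and $b'\in B$ is central in $A \stackrel{_2}{\ast} B$; this is exactly the observation recorded in the remark following the definition. In particular both $[a,b]$ and $[a,c]$ lie in the center, hence commute with each other, yielding $[a,b][a,c]=[a,c][a,b]$ and therefore $[a,bc]=[a,cb]$.

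There is no genuine obstacle here, since the entire content is supplied by Proposition \ref{morfismo} and by centrality; the only point deserving a moment's attention is bookkeeping, namely applying the homomorphism property in the correct order on each of the two sides so that the comparison reduces cleanly to the commuting of two central elements. Once the two expansions are written down, the conclusion is immediate.
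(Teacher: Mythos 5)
Your proof is correct and follows essentially the same route as the paper: expand both sides via Proposition \ref{morfismo} and observe that $[a,b]$ and $[a,c]$ commute. The paper leaves the commuting step implicit in the chain $[a,bc]=[a,b][a,c]=[a,c][a,b]=[a,cb]$, whereas you justify it explicitly by the centrality of the commutators $[a',b']$ built into the definition of $A \stackrel{_2}{\ast} B$ --- a harmless (indeed welcome) elaboration of the same argument.
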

\begin{proof} By Proposition \ref{morfismo} we have that 
$[a,bc]=[a,b][a,c]=[a,c][a,b]=[a,cb].$
\end{proof}

\begin{corollary}\label{commuteswithcommutator}
$A$ commutes with $[B,B]$  inside $A \stackrel{_2}{\ast} B$.
\end{corollary}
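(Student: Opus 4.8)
The plan is to reduce the statement to showing that a single element $a \in A$ commutes with a single generator $[b,c]$ of $[B,B]$, where $b,c \in B$. Since $[B,B]$ is generated by such commutators and the set of elements commuting with a fixed $a$ is a subgroup, and since $A$ is generated by its elements $a$, establishing $a[b,c]a^{-1}=[b,c]$ for all such $a,b,c$ will suffice.

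First I would rewrite the conjugate $a[b,c]a^{-1} = a\,bcb^{-1}c^{-1}\,a^{-1}$ by distributing the conjugation by $a$ across the four factors and applying the identity $axa^{-1}=[a,x]x$, which is immediate from the convention $[a,x]=axa^{-1}x^{-1}$, for each $x \in \{b,c,b^{-1},c^{-1}\}$. This rewrites the expression as a product of the four commutators $[a,b]$, $[a,c]$, $[a,b^{-1}]$, $[a,c^{-1}]$ interleaved with the letters $b,c,b^{-1},c^{-1}$.

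Next I would invoke the defining property of $A \stackrel{_2}{\ast} B$, namely that every commutator $[a,\cdot]$ with $a\in A$ and the remaining entry in $B$ is central. Centrality lets me pull all four commutator factors to the left, leaving $[a,b][a,c][a,b^{-1}][a,c^{-1}]\cdot bcb^{-1}c^{-1}$, and the trailing word is exactly $[b,c]$. It then remains to see that the leading product of commutators is trivial. By Proposition~\ref{morfismo} the map $[a,-]\colon B \to [A,B]^{(2)}$ is a homomorphism, so $[a,b^{-1}]=[a,b]^{-1}$ and $[a,c^{-1}]=[a,c]^{-1}$; since these factors are central they commute among themselves, and $[a,b][a,c][a,b]^{-1}[a,c]^{-1}$ collapses to the identity. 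Hence $a[b,c]a^{-1}=[b,c]$, proving the claim.

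The only real subtlety, and the one place to be careful, is the treatment of the inverse entries: rather than manipulating $[a,b^{-1}]$ and $[a,c^{-1}]$ directly, one should use the homomorphism property of Proposition~\ref{morfismo} to identify them with $[a,b]^{-1}$ and $[a,c]^{-1}$, and one must use centrality to justify the reordering that exposes the cancellation. Everything else is routine bookkeeping.
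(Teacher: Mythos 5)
Your proof is correct and rests on exactly the same ingredients as the paper's own argument: Proposition~\ref{morfismo} (the homomorphism property of $[a,-]$) together with the centrality, hence mutual commutativity, of the commutators $[a,b]$. The paper merely packages the computation more compactly, evaluating $[a,[b,b']]=[a,bb'][a,b^{-1}b'^{-1}]=[a,b'b][a,b^{-1}b'^{-1}]=[a,b'bb^{-1}b'^{-1}]=1$ directly through the homomorphism $[a,-]$, whereas you expand the conjugate $a[b,c]a^{-1}$ letter by letter and then cancel; the two calculations are the same argument in different clothing.
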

 \begin{proof}
 By Proposition \ref{morfismo} and Corollary \ref{commuting} we have that 
 $$[a,[b,b']]=[a,bb^\prime][a,b^{-1}b^{\prime -1}]=[a,b^\prime b][a,b^{-1}b'^{-1}]=[a,b^\prime bb^{-1}b^{\prime -1}]=1.$$
\end{proof}

In order to better understand the second nilpotent product, it is convenient to use the tensor product of groups (not necessarily abelian). This tensor product and the corresponding results were already introduced by Whitney in \cite{Whit}.

\begin{defi}
  Let $A$ and $B$ be two groups. The tensor product $A \tilde{\otimes} B$ is defined as

$$ \mbox{ \LARGE $\nicefrac{ {  \mathbb F_{\tiny A \times B} } }{ {\tiny {\begin{matrix} (a_1a_2, b) \sim (a_1, b) (a_2, b) \\ (a, b_1b_2) \sim (a, b_1) (a, b_2) \end{matrix}}}} $ } $$

\noindent where $\mathbb{F}_{A \times B}$ denotes the free group generated by the set $A\times B$.
\end{defi}
The natural application $A \times B \to A \tilde{\otimes} B$ is a group homomorphism in each variable, 
and for every group $G$ and every homomorphism in each variable $\varphi:A\times B\to G$
the following universal
property holds true
\begin{equation}\label{pu}
 \xymatrix{A \times B \ar[drr]^{\varphi} \ar[d] \\ A \tilde{\otimes} B \ar@{.>}[rr]^{\exists!} & & G}
\end{equation}

\begin{remark}  We use the provisory notation $\tilde{\otimes}$ because for abelian groups this definition
differs {\it a priori} from the usual tensor product construction, where one takes the quotient from the free abelian group
generated by $A\times B$. However, we will see that in the abelian case it coincides with the usual tensor product of abelian groups.
\end{remark}

The following proposition is due to Whitney, see \cite[Theorem 11]{Whit}.
It was also reproved by MacHenry in \cite[Theorem 17]{McH} in the same context as ours.
For the sake of completeness, we provide a simple proof of it.
\begin{proposition}[Whitney]\label{EqualitoOfTensorProducts}
$A \tilde{\otimes} B$ is an abelian group and it is isomorphic to $\nicefrac{A}{[A,A]} \otimes \nicefrac{B}{[B,B]}$.
Namely, the tensor product between nonabelian groups is the usual tensor product between the abelianized groups. 
\end{proposition}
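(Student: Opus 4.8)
The plan is to prove the two assertions in turn: first that $A \tilde{\otimes} B$ is abelian, and then, using this, to match it with the ordinary tensor product by playing the universal property \eqref{pu} against the universal property of the tensor product of abelian groups. Throughout I will write $a\tilde\otimes b$ for the image in $A\tilde\otimes B$ of a generator $(a,b)$, so that the defining relations become $(a_1a_2)\tilde\otimes b=(a_1\tilde\otimes b)(a_2\tilde\otimes b)$ and $a\tilde\otimes(b_1b_2)=(a\tilde\otimes b_1)(a\tilde\otimes b_2)$, and these elements generate the group.

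To establish abelianness, I would first record the elementary consequences $1_A\tilde\otimes b=a\tilde\otimes 1_B=1$ and $a^{-1}\tilde\otimes b=(a\tilde\otimes b)^{-1}$, obtained by setting one argument equal to the identity. The crucial step is to expand the element $(a_1a_2)\tilde\otimes(b_1b_2)$ in two different orders. Expanding the left variable first and then the right gives $(a_1\tilde\otimes b_1)(a_1\tilde\otimes b_2)(a_2\tilde\otimes b_1)(a_2\tilde\otimes b_2)$, whereas expanding the right variable first gives $(a_1\tilde\otimes b_1)(a_2\tilde\otimes b_1)(a_1\tilde\otimes b_2)(a_2\tilde\otimes b_2)$. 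Equating the two and cancelling the common outermost factors yields $(a_1\tilde\otimes b_2)(a_2\tilde\otimes b_1)=(a_2\tilde\otimes b_1)(a_1\tilde\otimes b_2)$. As the four arguments are arbitrary, any two generators commute, and hence $A\tilde\otimes B$ is abelian.

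Once abelianness is in hand, the identification is a formal consequence of the universal properties. Set $\bar A=\nicefrac{A}{[A,A]}$ and $\bar B=\nicefrac{B}{[B,B]}$. For each fixed $b$, the map $a\mapsto a\tilde\otimes b$ is a homomorphism from $A$ into the abelian group $A\tilde\otimes B$, so it annihilates $[A,A]$; symmetrically $a\tilde\otimes b$ depends only on the class of $b$ modulo $[B,B]$. Thus the canonical bihomomorphism $A\times B\to A\tilde\otimes B$ factors through $\bar A\times\bar B$, and since the target is abelian, the universal property of the tensor product of abelian groups produces a homomorphism $\Phi\colon \bar A\otimes\bar B\to A\tilde\otimes B$. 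In the other direction, composing the abelianization maps with the canonical bilinear map gives a bihomomorphism $A\times B\to\bar A\otimes\bar B$ into a group, so the universal property \eqref{pu} of $A\tilde\otimes B$ supplies a homomorphism $\Psi\colon A\tilde\otimes B\to\bar A\otimes\bar B$ in the opposite direction.

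Finally, $\Phi$ and $\Psi$ are mutually inverse, since each is determined by its values on the respective generating symbols and both composites act as the identity on generators. I expect the only genuine content to lie in the abelianness computation of the second paragraph; the remainder is a routine chase, and the only point requiring care is to confirm that the two candidate maps are well defined, which is exactly what the universal properties guarantee.
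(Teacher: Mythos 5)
Your proof is correct and takes essentially the same route as the paper: the same double-expansion of a product $(a_1a_2)\tilde{\otimes}(b_1b_2)$ in two orders to force commutativity of the generators, followed by the identification with $\nicefrac{A}{[A,A]} \otimes \nicefrac{B}{[B,B]}$ via universal properties. The only cosmetic difference is that you build the mutually inverse maps $\Phi$ and $\Psi$ explicitly, whereas the paper invokes directly that both abelian groups satisfy the same universal property for abelian targets and hence are canonically isomorphic.
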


\begin{proof} We have the following identities
 $$(a_1 a_2) \tilde{\otimes} (b_2 b_1) = (a_1 a_2 \tilde{\otimes} b_2) (a_1 a_2 \tilde{\otimes} b_1) = (a_1 \tilde{\otimes} b_2) (a_2 \tilde{\otimes} b_2) (a_1 \tilde{\otimes} b_1) (a_2 \tilde{\otimes} b_1);$$
 and
 $$(a_1 a_2) \tilde{\otimes} (b_2 b_1) = (a_1 \tilde{\otimes} b_2 b_1) (a_2 \tilde{\otimes} b_2 b_1) = (a_1 \tilde{\otimes} b_2) (a_1 \tilde{\otimes} b_1) (a_2 \tilde{\otimes} b_2) (a_2 \tilde{\otimes} b_1).$$
Then $(a_1 \tilde{\otimes} b_1) (a_2 \tilde{\otimes} b_2) = (a_2 \tilde{\otimes} b_2) (a_1 \tilde{\otimes} b_1)$.
 Hence $A \tilde{\otimes} B$ is abelian.

On the other hand, the natural arrow $A \times B \to \nicefrac{A}{[A,A]} \otimes \nicefrac{B}{[B,B]}$ is a group homomorphism in each variable
and it is easy to check that it has universal property (\ref{pu}) but only for $G$ abelian.
As the abelian group $A \tilde{\otimes} B$ has the same universal property, they are isomorphic through
the canonical morphism.
\end{proof}

From now on,  $\tilde{\otimes}$ will be denoted by $\otimes$. We will need the next result about commutator subgroups of the free product of two groups. Its proof is elementary and can be found, for instance, in  \cite[Section 1.3, Proposition 4]{serreTRees}.
\begin{lemma}
 \mbox{$[A,B]=\langle \{[a,b]\}_{a\in A, b\in B} \rangle$} is a free subgroup of $A\ast B$ in the generators $[a,b]$, $a\in A$, $b \in B$, $a$, $b \neq 1$. Moreover it is normal in $A\ast B$. 
 \end{lemma}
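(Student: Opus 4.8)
The plan is to prove normality by a direct commutator computation in the free product, and then to obtain freeness together with the stated basis by letting the relevant group act on the Bass--Serre tree of $A\ast B$. I begin with normality of $[A,B]:=\langle\{[a,b]\}_{a\in A,\,b\in B}\rangle$. Since $A\ast B$ is generated by $A\cup B$, it suffices to check that conjugating a generator $[a,b]$ by $a'\in A$ or by $b'\in B$ returns an element of $[A,B]$. This follows from the identities
$$a'[a,b]a'^{-1}=[a'a,b]\,[a',b]^{-1},\qquad b'[a,b]b'^{-1}=[a,b']^{-1}\,[a,b'b],$$
which hold in $A\ast B$ and are verified by straightforward cancellation (in the same spirit as the identity $[a,bc]=[a,b]b[a,c]b^{-1}$ exploited in Proposition \ref{morfismo}). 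As each right-hand side is a product of commutators $[\cdot,\cdot]^{\pm1}$, it lies in $[A,B]$, so the subgroup is normal.

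Next I would identify $[A,B]$ with the kernel $N$ of the canonical surjection $A\ast B\to A\oplus B$. The inclusion $[A,B]\subseteq N$ is immediate, since every $[a,b]$ dies in $A\oplus B$. For the reverse inclusion, note that the quotient map $A\ast B\to A\ast B/[A,B]$ carries $A$ and $B$ to commuting subgroups, hence factors through a homomorphism $A\oplus B\to A\ast B/[A,B]$; this is a two-sided inverse to the surjection $A\ast B/[A,B]\to A\oplus B$ induced by $[A,B]\subseteq N$, so $A\ast B/[A,B]\cong A\oplus B$ and therefore $[A,B]=N$. Now $A\ast B$ acts on its Bass--Serre tree $T$, whose vertex stabilizers are precisely the conjugates of $A$ and of $B$ and whose edge stabilizers are trivial. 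Because the restrictions of $A\ast B\to A\oplus B$ to $A$ and to $B$ are injective, $N\cap A=N\cap B=1$, and since $N$ is normal, $N\cap gAg^{-1}=N\cap gBg^{-1}=1$ for all $g$. Thus $N$ meets every vertex stabilizer trivially, so it acts freely on $T$ and is therefore a free group \cite[Section 1.3]{serreTRees}.

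It remains to pin down the basis, and this is the step demanding the most care. I would compute the quotient graph $N\backslash T$ using $N\backslash(A\ast B)=A\oplus B$: its vertex set is a copy of $A$ together with a copy of $B$, its edge set is $A\times B$, and the edge $(a,b)$ joins the vertex $a$ to the vertex $b$, so $N\backslash T$ is the complete bipartite graph on $A\sqcup B$. Choosing the spanning tree formed by the edges $(1,b)$ for $b\in B$ and $(a,1)$ for $a\in A$, the edges lying outside this tree are exactly the $(a,b)$ with $a\neq1$ and $b\neq1$, and the fundamental group of a connected graph is free on the loops indexed by the non-tree edges (equivalently, Bass--Serre theory for a free action); in the finite case this matches the rank count $(|A|-1)(|B|-1)$. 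The main obstacle is the bookkeeping needed to verify that, after transporting through the covering $T\to N\backslash T$ and the chosen lift of the spanning tree, the basis loop attached to the non-tree edge $(a,b)$ is exactly the commutator $[a,b]$. Granting this identification, $\{[a,b]:a\neq1,\ b\neq1\}$ is a free basis of $[A,B]$, which together with the normality established above completes the proof.
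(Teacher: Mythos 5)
Your proof is essentially correct, but it is worth pointing out that the paper itself contains no proof of this lemma: it simply cites Section 1.3, Proposition 4 of Serre's \emph{Trees}, where the statement is deduced from the structure (normal form) theorem for free products by an elementary reduced-word argument, before trees even enter that book. Your route is therefore genuinely different. The normality identities $a'[a,b]a'^{-1}=[a'a,b]\,[a',b]^{-1}$ and $b'[a,b]b'^{-1}=[a,b']^{-1}[a,b'b]$ are correct, as is the universal-property argument identifying $[A,B]$ with the kernel $N$ of $A\ast B\to A\oplus B$; the Bass--Serre part (trivial intersection with all vertex stabilizers, hence a free action on the tree $T$, hence $N$ free) is also correct, and your computation of the quotient graph $N\backslash T$ as the complete bipartite graph on $A\sqcup B$ with edge set $A\times B$ is right. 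What your approach buys is conceptual transparency and the rank formula $(|A|-1)(|B|-1)$ for free; what it costs is the covering-theory machinery, plus the one step you explicitly leave open, which happens to be the only place where the specific elements $[a,b]$ (rather than just freeness and the rank) are pinned down. Note that in the infinite-rank case you cannot evade that step by a cardinality count, so it is not optional.

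Fortunately the step you ``grant'' does work, and the bookkeeping is short; here it is, to close your argument. Identify the vertices of $T$ with the cosets $gA$, $gB$ and the edges with the elements $g\in A\ast B$, the edge $g$ joining $gA$ to $gB$. Lift your spanning tree to the subtree $\tilde T_0\subset T$ with edge set $\{a:a\in A\}\cup\{b:b\in B\}$ and vertex set $\{A,B\}\cup\{aB:a\in A\}\cup\{bA:b\in B\}$; it contains exactly one vertex and one edge from each $N$-orbit of the tree part. For $a\neq 1$, $b\neq 1$, the edge $ab\in A\ast B$ lies over the non-tree edge $(a,b)$; its endpoint $abB=aB$ lies in $\tilde T_0$, while the other endpoint $abA$ does not. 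The basis element attached to $(a,b)$ is the unique $n\in N$ carrying $abA$ into $\tilde T_0$, necessarily onto $bA$, the only vertex of $\tilde T_0$ in the $N$-orbit of $abA$. Solving $n\cdot abA=bA$ gives $n=ba'b^{-1}a^{-1}$ for some $a'\in A$, and the image of such $n$ in $A\oplus B$ is $(a'a^{-1},1)$, so $n\in N$ forces $a'=a$, i.e. $n=bab^{-1}a^{-1}=[b,a]=[a,b]^{-1}$. Hence the Bass--Serre basis of $N$ is $\{[a,b]^{-1}:a\neq 1,\ b\neq 1\}$, and since inverting members of a free basis again yields a free basis, $\{[a,b]:a\neq 1,\ b\neq 1\}$ is a free basis of $N=[A,B]$, as required.
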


The following proposition is the main result of the article of MacHenry, \cite{McH}. Arguably, the proof we exhibit here using universal properties is simpler than the one given in \cite{McH}.

\begin{proposition} \label{tensor}
The group $[A,B]^{(2)}$ is isomorphic to  $A \otimes B$.
\end{proposition}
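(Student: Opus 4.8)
The plan is to produce mutually inverse homomorphisms between $A\otimes B$ and $[A,B]^{(2)}$.

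First I would build the surjection $\Phi\colon A\otimes B\to [A,B]^{(2)}$. By Proposition \ref{morfismo} the assignment $b\mapsto [a,b]$ is a homomorphism $B\to[A,B]^{(2)}$ for each fixed $a$, and the symmetric identity $[a_1a_2,b]=[a_1,b][a_2,b]$ (which holds in $A\stackrel{_2}{\ast}B$ for the same reason, since $[A,B]^{(2)}$ is central) shows that $a\mapsto[a,b]$ is a homomorphism for each fixed $b$. Thus $(a,b)\mapsto[a,b]$ is a homomorphism in each variable $A\times B\to[A,B]^{(2)}$, and the universal property (\ref{pu}) of the tensor product yields a homomorphism $\Phi\colon A\otimes B\to[A,B]^{(2)}$ with $\Phi(a\otimes b)=[a,b]$. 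Since the commutators $[a,b]$ generate $[A,B]^{(2)}$, $\Phi$ is surjective; it remains to invert it.

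The key step is to exhibit a group in which the pairing $(a,b)\mapsto a\otimes b$ is realized by an honest commutator. To this end I would define a Heisenberg-type central extension $E$ of $A\oplus B$ by $A\otimes B$: as a set $E=A\times B\times(A\otimes B)$, with multiplication
$$(a_1,b_1,t_1)(a_2,b_2,t_2)=\big(a_1a_2,\,b_1b_2,\,t_1t_2\,(a_1\otimes b_2)\big).$$
A direct check using the bilinearity of $\otimes$ shows that $f\big((a_1,b_1),(a_2,b_2)\big)=a_1\otimes b_2$ is a $2$-cocycle on $A\oplus B$ with values in the (central, trivially acted-on) abelian group $A\otimes B$, so that $E$ is a group whose centre contains $\{1\}\times\{1\}\times(A\otimes B)$. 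The maps $a\mapsto(a,1,1)$ and $b\mapsto(1,b,1)$ are homomorphisms $A\to E$ and $B\to E$ (here $a\otimes 1=1\otimes b=1$), so by the universal property of the free product they assemble into a homomorphism $\psi\colon A\ast B\to E$.

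Now a short computation gives $\psi([a,b])=(1,1,a\otimes b)$, which lies in the centre of $E$; hence $\psi$ sends every generator $[w,[a,b]]$ of $[A\ast B,[A,B]]$ to the identity and therefore descends to $\bar\psi\colon A\stackrel{_2}{\ast}B\to E$. Because $A\oplus B$ is abelian, $\bar\psi$ carries $[A,B]^{(2)}$ into $\ker(E\to A\oplus B)=\{1\}\times\{1\}\times(A\otimes B)$, and composing with the identification of this subgroup with $A\otimes B$ defines a homomorphism $\Psi\colon[A,B]^{(2)}\to A\otimes B$ with $\Psi([a,b])=a\otimes b$. Then $\Psi\circ\Phi$ and $\Phi\circ\Psi$ fix the respective generating sets $\{a\otimes b\}$ and $\{[a,b]\}$, so both are the identity and $\Phi$ is the desired isomorphism.

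The hard part is the construction of $E$ and the verification that $f$ is a cocycle: this is exactly what guarantees the injectivity of $\Phi$, i.e. that the bilinearity relations are the only relations among the commutators $[a,b]$. An alternative route would use the Lemma above, writing $[A,B]^{(2)}$ as the quotient of the free group $[A,B]$ by $[A,B]\cap[A\ast B,[A,B]]$ and identifying this intersection with the normal closure of the bilinearity relations; but computing that intersection explicitly is more delicate than producing the inverse map through $E$, so I would prefer the central-extension argument.
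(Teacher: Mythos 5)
Your proof is correct, and it inverts $\Phi$ by a genuinely different route than the paper. The surjection is the same in both arguments: the pairing $(a,b)\mapsto[a,b]$ is a homomorphism in each variable (Proposition \ref{morfismo}, plus centrality of $[A,B]^{(2)}$ for the symmetric identity), so the universal property \eqref{pu} yields $\Phi(a\otimes b)=[a,b]$. For the inverse, the paper leans on the unnumbered Lemma (quoted from Serre) that $[A,B]$ is a \emph{free} group on the commutators $[a,b]$, $a,b\neq 1$: freeness lets one define $\tilde u([a,b])=a\otimes b$ on $[A,B]$ with no relations to check, and then a conjugation computation, $\tilde u(x[a,b]x^{-1})=\tilde u([a,b])$ for $x\in A$ or $x\in B$ followed by induction on word length, shows $\tilde u$ kills $[A\ast B,[A,B]]$ and so descends to $[A,B]^{(2)}=\nicefrac{[A,B]}{[A\ast B,[A,B]]}$. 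You avoid the freeness lemma entirely: your cocycle verification is right (both sides of the cocycle identity reduce to $(a_1\otimes b_2)(a_1\otimes b_3)(a_2\otimes b_3)$ by bilinearity and commutativity of $A\otimes B$), the two coordinate embeddings are homomorphisms because $a\otimes 1=1\otimes b=1$, the computation $\psi([a,b])=(1,1,a\otimes b)$ is correct, and centrality of $\{1\}\times\{1\}\times(A\otimes B)$ in $E$ makes $\psi$ kill every generator $[w,q]$ of $[A\ast B,[A,B]]$, so $\Psi$ is well defined and the two composites fix generators. What each approach buys: the paper's argument is shorter but imports a nontrivial structural fact about free products; yours is self-contained modulo an elementary cocycle check, and it gives more than the statement asks for, since by Proposition \ref{abc} your $\bar\psi\colon A\stackrel{_2}{\ast}B\to E$ is in fact an isomorphism (injectivity: $\bar\psi(abc)=1$ forces $a=b=1$ and then $c=\Phi\Psi(c)=1$), so $E$ is an explicit cocycle model of the second nilpotent product realizing the central extension \eqref{SESeq} concretely.
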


\begin{proof}
Observe first that since $ [A,B]$ is a normal subgroup of $A\ast B$, then $[A\ast B,[A,B]]$ is a subgroup of $[A,B]$. The identity $x[y,z]x^{-1}=[xyx^{-1},xzx^{-1}]$ shows that it is normal.
It follows that $[A,B]^{(2)}=\nicefrac{[A,B]}{[A \ast B, [A,B]]}$.

Let us give explicitly the isomorphisms $\xymatrix{[A,B]^{(2)} \ar@<.5ex>[r]^u & A \otimes B \ar@<.5ex>[l]^v}$.
The application $A \times B \to [A,B]^{(2)}$ given by $(a,b) \mapsto [a,b]$ is an homomorphism in each variable because of Proposition \ref{morfismo}.
This defines $v$.

On the other hand, the map $[A,B] \xto{\tilde u} A \otimes B$ given by $[a,b] \mapsto a \otimes b$
extends to a group homomorphism thanks to the previous lemma. Thus, in order to define $u$, we just need to show that
$\tilde u$ vanishes on $[A \ast B, [A,B]]$.
To this end, take an element  $[p,q]\in [A \ast B, [A,B]]$ with $p \in A \ast B$ and $q \in [A,B]$.
Then $\tilde u ([p,q]) = \tilde u (pqp^{-1}q^{-1}) =
\tilde u (pqp^{-1}) \tilde u (q)^{-1}$. Thus, we must show that $\tilde u (pqp^{-1}) = \tilde u (q)$. If $x \in A$ then
\begin{align*}
\tilde u (x[a,b]x^{-1}) &= \tilde u ([xa,b][b,x]) = (xa \otimes b)(x \otimes b)^{-1} \\
&= (x^{-1} \otimes b)(xa \otimes b) = a \otimes b = \tilde u ([a,b]).
\end{align*}
A similar argument works for $x \in B$. Induction on the length of a word $x\in A\ast B$ shows that $\tilde u (x[a,b]x^{-1}) = \tilde u ([a,b])$.
Since $\{[a,b] :a\in A,\,b\in B \}$ generates $[A,B]$, it follows that $[A \ast B, [A,B]]\subset \ker(\tilde u)$.
In particular $[A,B]^{(2)} \xto{ u} A \otimes B$ is well defined. It is clear now that $uv = id$ and $vu= id$.
\end{proof}
\begin{remark}
Combining it with the exact sequence \eqref{SESeqa} gives the central extension 
\begin{equation} \label{SESeq}
1 \to A\otimes B \to A \stackrel{_2}{\ast} B \to A \oplus B \to 1.
\end{equation}
\end{remark}

\begin{corollary}\label{NilpotentProductEqualDirectSum}
$|A \stackrel{_2}{\ast} B|=|A||B||A\otimes B|$. In particular,
 if a group $A$ is perfect, namely if $A=[A,A]$, 
 then for any group $B$ the second nilpotent product $A \stackrel{_2}{\ast} B$ is isomorphic to the direct product $A \oplus B$.
\end{corollary}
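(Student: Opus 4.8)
The plan is to read off both assertions directly from the central extension \eqref{SESeq}, so essentially no new work is required. For the cardinality formula I would invoke the elementary fact that in any short exact sequence of groups $1 \to N \to G \to Q \to 1$ the cardinalities multiply, $|G| = |N|\,|Q|$. Applying this to \eqref{SESeq} with $N = A \otimes B$ and $Q = A \oplus B$, and observing that the underlying set of the direct sum $A \oplus B$ is $A \times B$ so that $|A \oplus B| = |A|\,|B|$, yields $|A \stackrel{_2}{\ast} B| = |A|\,|B|\,|A \otimes B|$ immediately.

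For the second assertion, the key point is that perfectness of $A$ forces the kernel of \eqref{SESeq} to vanish. Indeed, by Proposition \ref{EqualitoOfTensorProducts} we have $A \otimes B \cong \nicefrac{A}{[A,A]} \otimes \nicefrac{B}{[B,B]}$; if $A = [A,A]$ then $\nicefrac{A}{[A,A]}$ is the trivial group, and tensoring the trivial group with anything gives the trivial group. Hence $A \otimes B = \{1\}$, and the surjection $A \stackrel{_2}{\ast} B \to A \oplus B$ appearing in \eqref{SESeq} now has trivial kernel, so it is an isomorphism. This is precisely the claimed identification of $A \stackrel{_2}{\ast} B$ with the direct product $A \oplus B$.

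I do not anticipate any genuine obstacle, since each step is a one-line consequence of results already in hand. The only point meriting a moment's care is the multiplicativity of cardinality across the exact sequence in the infinite case; but this is routine cardinal arithmetic, the total group being partitioned into $|Q|$ cosets of the normal subgroup, each of cardinality $|N|$, and it needs no special treatment here.
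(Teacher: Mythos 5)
Your proposal is correct and follows essentially the same route as the paper: both read the cardinality formula off the central extension \eqref{SESeq}, and both use Proposition \ref{EqualitoOfTensorProducts} to conclude that $A\otimes B$ is trivial when $A$ is perfect, so that the surjection onto $A\oplus B$ in \eqref{SESeq} becomes an isomorphism. The only difference is that you spell out the routine details (coset counting for the cardinality, trivial kernel implying isomorphism) that the paper leaves implicit.
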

\begin{proof}
 The equality between the cardinals follows from the exact sequence \eqref{SESeq}.
  Moreover, if $A$ is perfect, then Proposition \ref{EqualitoOfTensorProducts} entails that $A\otimes B=\{0\}$. 
  Hence, the exact sequence \eqref{SESeq} gives the required isomorphism.
 \end{proof}

The next proposition identifies the derived group in the second nilpotent product of two groups.
\begin{proposition} \label{[G,G]}
 Let $G=A \stackrel{_2}{\ast} B$. Its commutator subgroup $[G,G]$ is isomorphic to $[A,A] \oplus [B,B] \oplus [A,B]^{(2)}$.
 In particular, $G$ is abelian only when both $A$ and $B$ are abelian and $A\otimes B$ is trivial.
\end{proposition}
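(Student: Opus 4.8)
The plan is to realize $[G,G]$ as the internal direct product of the three subgroups $[A,A]$, $[B,B]$ and $[A,B]^{(2)}$ of $G$, and then to identify it abstractly. First I would record that these three subgroups commute elementwise with one another. The subgroup $[A,B]^{(2)} \cong A \otimes B$ is central in $G$ by the central extension \eqref{SESeq}, so it commutes with the other two; and $[A,A] \subseteq A$ commutes with $[B,B]$ by Corollary \ref{commuteswithcommutator}. Since pairwise-commuting subgroups have a product that is again a subgroup, the multiplication map sending $(x,y,z)$ to $xyz$ is a group homomorphism from $[A,A] \oplus [B,B] \oplus [A,B]^{(2)}$ onto $H := [A,A]\,[B,B]\,[A,B]^{(2)}$.

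Next I would check that this homomorphism is injective, so that $H \cong [A,A] \oplus [B,B] \oplus [A,B]^{(2)}$. This is immediate from the uniqueness part of Proposition \ref{abc}: an element $xyz$ with $x \in [A,A] \subseteq A$, $y \in [B,B] \subseteq B$ and $z \in [A,B]^{(2)}$ is already written in the canonical form $abc$, so $xyz = 1$ forces $x = y = z = 1$.

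It then remains to show $H = [G,G]$. The inclusion $H \subseteq [G,G]$ is clear, since $[A,A]$, $[B,B]$ and the generators $[a,b]$ of $[A,B]^{(2)}$ are all commutators of elements of $G$. For the reverse inclusion I would pass to the quotient $G/H$ and show it is abelian. As $H$ contains the kernel $[A,B]^{(2)} \cong A \otimes B$ of the projection in \eqref{SESeq}, we get $G/H \cong (A \oplus B)/\overline{H}$, where $\overline{H}$ is the image of $H$ in $A \oplus B$; this image is precisely $[A,A] \oplus [B,B]$, so $G/H \cong \nicefrac{A}{[A,A]} \oplus \nicefrac{B}{[B,B]}$ is abelian. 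Hence $[G,G] \subseteq H$, and combining the inclusions yields $[G,G] = H \cong [A,A] \oplus [B,B] \oplus [A,B]^{(2)}$, which is $[A,A] \oplus [B,B] \oplus (A \otimes B)$ by Proposition \ref{tensor}. The final assertion is then read off directly: $G$ is abelian exactly when $[G,G]$ is trivial, i.e. when each summand vanishes, meaning $A$ and $B$ are abelian and $A \otimes B$ is trivial.

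I expect the only delicate points to be the elementwise commutation of the three subgroups and the directness of their product, but both are dispatched cleanly, the former by the centrality in \eqref{SESeq} together with Corollary \ref{commuteswithcommutator}, and the latter by the normal-form uniqueness of Proposition \ref{abc}; beyond this the argument is bookkeeping.
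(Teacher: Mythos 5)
Your proof is correct, and its skeleton matches the paper's: both realize $[G,G]$ as the internal product $H=[A,A]\,[B,B]\,[A,B]^{(2)}$, with the pairwise commutation supplied by centrality of $[A,B]^{(2)}$ and Corollary \ref{commuteswithcommutator}, and directness (injectivity of the multiplication map) supplied by the uniqueness in Proposition \ref{abc}. Where you genuinely diverge is in the inclusion $[G,G]\subseteq H$. The paper proves it by explicit computation: a generic commutator expands as $[a_1b_1c_1,a_2b_2c_2]=[a_1,a_2][b_1,b_2]c$ with $c=[a_1,b_2][b_1,a_2]$ central, and products of such elements keep this shape, so the multiplication map is already surjective onto $[G,G]$. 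You instead invoke the characterization of $[G,G]$ as the smallest normal subgroup with abelian quotient: since $H$ contains $\ker(\pi)=[A,B]^{(2)}$ of the extension \eqref{SESeq}, one gets $G/H\cong \nicefrac{A}{[A,A]}\oplus\nicefrac{B}{[B,B]}$, which is abelian. Your route is cleaner and avoids all commutator calculus; the one point you gloss over is that $G/H$ only makes sense once $H$ is known to be normal in $G$ --- this does hold, precisely because $H\supseteq\ker(\pi)$ forces $H=\pi^{-1}\left([A,A]\oplus[B,B]\right)$, the preimage of a normal subgroup, so you should state it. What the paper's computational route buys in exchange is the explicit formula for commutators of generic elements, which is reused later in the paper: the nilpotency argument in Proposition \ref{StabProp} relies on the identity $[abc,a_1b_1c_1]=[a,a_1][b,b_1]$ for $a_1\in[A,A]$, $b_1\in[B,B]$, $c_1\in[A,B]^{(2)}$, which is exactly the kind of information your abstract argument does not produce.
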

\begin{proof}
By Proposition \ref{abc}, a generic commutator element in $G$ is of the form
 $[a_1b_1c_1,a_2b_2c_2]$ where $a_i \in A$, \mbox{$b_i \in B$} and $c_i \in [A,B]^{(2)}$.
This element is equal to $[a_1,a_2][b_1,b_2]c$ for some $c \in [A,B]^{(2)}\subset\mathcal Z(G)$, (in fact \mbox{$c=[a_1,b_2][b_1,a_2]$}). The product of two such elements is of the form 
$$[a_1,a_2][b_1,b_2]c[a_3,a_4][b_3,b_4]d = [a_1,a_2][a_3,a_4][b_1,b_2][b_3,b_4]cd,$$
 (here we used Corollary \ref{commuteswithcommutator}), 
and this is exactly the product  in the direct product $[A,A] \oplus [B,B] \oplus [A,B]^{(2)}$.
The subgroups $[A,A]$, $[B,B]$, $[A,B]^{(2)}$ are contained in $[G,G]$, so we have
an explicit surjective morphism $[A,A] \oplus [B,B] \oplus [A,B]^{(2)} \to [G,G]$.
 It is injective because of Proposition \ref{abc}.
\end{proof}

\vskip .2in
\begin{examples}
\hfill

\begin{enumerate}
\item If $p,q\in \mathbb N$  are coprime numbers, then 
$\nicefrac{\mathbb{Z}}{p\mathbb{Z}}\stackrel{_2}{\ast}\nicefrac{\mathbb{Z}}{q\mathbb{Z}}
\simeq
\nicefrac{\mathbb{Z}}{p\mathbb{Z}} \oplus \nicefrac{\mathbb{Z}}{q\mathbb{Z}}$.
This is  because whenever $(p,q)=1$ the only $\varphi$ that verifies the condition of the diagram \eqref{pu} is $\varphi=0$.
This means that $\nicefrac{\mathbb{Z}}{p\mathbb{Z}} \otimes \nicefrac{\mathbb{Z}}{q\mathbb{Z}}=0$.
Propositions \ref{abc} and \ref{tensor} yields the desired result.

\item \label{Example 2} More generally,  if $p,q\in \mathbb N$  and $d:={\rm gcd}(p,q)$, it follows that
$\nicefrac{\mathbb{Z}}{p\mathbb{Z}} \otimes \nicefrac{\mathbb{Z}}{q\mathbb{Z}}= \nicefrac{\mathbb{Z}}{d\mathbb{Z}}$.
Thus the order of $\nicefrac{\mathbb{Z}}{p\mathbb{Z}} \stackrel{_2}{\ast} \nicefrac{\mathbb{Z}}{q\mathbb{Z}}$ is $pqd$.

\item $\nicefrac{\mathbb{Z}}{n\mathbb{Z}} \stackrel{_2}{\ast} \nicefrac{\mathbb{Z}}{n\mathbb{Z}}$ is isomorphic to the Heisenberg group:
$$\texttt{Heis}\left (\nicefrac{\mathbb{Z}}{n\mathbb{Z}}\right )=\left\{\begin{pmatrix} 1 && a &&c\\0 &&1 && b\\0&& 0&& 1\end{pmatrix}:\,\,a,b,c\in \nicefrac{\mathbb{Z}}{n\mathbb{Z}} \right\}$$
A straightforward way to see this is to verify that the function
$$\Psi: \nicefrac{\mathbb{Z}}{n\mathbb{Z}} \stackrel{_2}{\ast} \nicefrac{\mathbb{Z}}{n\mathbb{Z}}\to \texttt{Heis}\left (\nicefrac{\mathbb{Z}}{n\mathbb{Z}}\right )$$
$$
\Psi(abc)= \begin{pmatrix} 1 && b && c\\0 && 1 && a\\0&& 0&& 1\end{pmatrix}
$$
is a group isomorphism (here we are using Proposition \ref{abc}). Observe that in particular $\nicefrac{\mathbb{Z}}{n\mathbb{Z}} \stackrel{_2}{\ast} \nicefrac{\mathbb{Z}}{n\mathbb{Z}}$ is a non abelian group of order $n^3$ and when $n=2$ this is also isomorphic to the dihedral group $D_4$. The same strategy shows that
$\mathbb{Z} \stackrel{_2}{\ast} \mathbb{Z}$ is isomorphic to the Heisenberg group $\texttt{Heis}\left (\mathbb{Z}\right )$.

\item Denote with $D_n$ the dihedral group of order $2n$, namely the group with presentation $\langle a,b\,| a^n=1, b^2=1, bab=a^{-1}\rangle$.
It is plain that $[D_n,D_n]=\langle a^{-2}\rangle$.  If $n$ is even, $[D_n,D_n]$ has order $n/2$
in which case $\nicefrac{D_n}{[D_n,D_n]}=\langle \bar a,\bar b\rangle\cong \nicefrac{\mathbb{Z}}{2\mathbb{Z}}\oplus\nicefrac{\mathbb{Z}}{2\mathbb{Z}}$. Thus for $n$ even, $D_n \stackrel{_2}{\ast} D_n$ is a (non-abelian) group of order $2^6n^2$ and its derived subgroup is isomorphic to
 $\oplus_1^2(\nicefrac{\mathbb{Z}}{\frac{n}{2}\mathbb{Z}})\,\oplus_1^4 (\nicefrac{\mathbb{Z}}{2\mathbb{Z}})$.
If $n$ is odd, a similar analysis shows that  $D_n \stackrel{_2}{\ast} D_n$ is a (non abelian) group of order $2^3n^2$ and its derived subgroup is isomorphic to $\oplus_1^2\nicefrac{\mathbb{Z}}{n\mathbb{Z}}\oplus \nicefrac{\mathbb{Z}}{2\mathbb{Z}}$.
\end{enumerate}
\end{examples}

\begin{remark} Example (\ref{Example 2}) and Proposition \ref{morfismo} can be used to prove that inside $\nicefrac{\mathbb{Z}}{4\mathbb{Z}}\stackrel{_2}{\ast}\nicefrac{\mathbb{Z}}{2\mathbb{Z}}$, the subgroup generated by the element of order $2$ in $\nicefrac{\mathbb{Z}}{4\mathbb{Z}}$ together with $\nicefrac{\mathbb{Z}}{2\mathbb{Z}}$ is abelian and isomorphic to $\nicefrac{\mathbb{Z}}{2\mathbb{Z}}\oplus \nicefrac{\mathbb{Z}}{2\mathbb{Z}}$. 
This shows that  if $\tilde A\triangleleft A$ and $\tilde B\triangleleft B$, the group $\langle \tilde A, \tilde B\rangle \triangleleft  A\stackrel{_2}{\ast}  B$ is not isomorphic to $\tilde A\stackrel{_2}{\ast}  \tilde B$. 
\end{remark}

\section{Permanence properties of the second nilpotent product of two groups}
The purpose of this section is to prove Theorem (A) from the introduction. We rephrase it here.

\begin{proposition}\label{StabProp}
Let $A$ and $B$ be countable groups. The second nilpotent product of them, $A \stackrel{_2}{\ast} B$, has one of the following properties:
\begin{enumerate}
\item\label{nil} nilpotent;

\item\label{amen} amenable;

\item\label{haag} Haagerup approximation property;

\item \label{exact} exact (or boundary amenable, or satisfies property A of Yu);

\item \label{propT} Kazdhan property (T);

\end{enumerate}
if and only if $A$ and $B$ have the same property.
\end{proposition}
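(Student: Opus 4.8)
The plan is to read all five properties off the single central extension \eqref{SESeq},
$$1 \to A\otimes B \to G \to A \oplus B \to 1, \qquad G := A \stackrel{_2}{\ast} B,$$
whose kernel $A\otimes B\cong\nicefrac{A}{[A,A]}\otimes\nicefrac{B}{[B,B]}$ is abelian (Propositions \ref{tensor} and \ref{EqualitoOfTensorProducts}), together with the fact (Proposition \ref{abc}) that $A$ and $B$ are at once subgroups of $G$ and, via the retractions $\pi_A,\pi_B$, quotients of $G$.

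For the ``only if'' direction I would argue uniformly: nilpotency, amenability, the Haagerup property and exactness all pass to subgroups, while property (T) passes to quotients; since $A$ and $B$ are both subgroups and quotients of $G$, each property enjoyed by $G$ descends to $A$ and to $B$.

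For the ``if'' direction I would first reduce to the quotient $A\oplus B$: a direct product of two groups has any one of the five properties exactly when both factors do, so if $A$ and $B$ share a property then so does $A\oplus B$. It then remains to transport the property across \eqref{SESeq}. Nilpotency is immediate, as a central extension of a nilpotent group by an abelian group is again nilpotent (the class grows by at most one). Amenability and exactness are just as routine, both classes being closed under extensions while the abelian kernel $A\otimes B$ is amenable, hence exact. Property (T) I would settle by a remark that collapses the kernel: if $A$ and $B$ have (T) their abelianizations are finite, so $A\otimes B\cong\nicefrac{A}{[A,A]}\otimes\nicefrac{B}{[B,B]}$ is \emph{finite}; thus \eqref{SESeq} exhibits $G$ as a finite central extension of the property (T) group $A\oplus B$, and a group with a finite normal subgroup whose quotient has (T) again has (T).

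The hard part will be the Haagerup property, since it is \emph{not} preserved under general extensions with amenable kernel --- the relative property (T) of $\big(\mathbb Z^2\rtimes\SL_2(\mathbb Z),\,\mathbb Z^2\big)$ being the usual obstruction. The feature I intend to exploit is that \eqref{SESeq} is \emph{central}, so the conjugation action on $A\otimes B$ is trivial and no relative property (T) can intervene; a central extension of a Haagerup group by an abelian group is again Haagerup. In practice I would build a proper conditionally negative definite function on $G$ as the sum of the pullback, along the quotient map $G\to A\oplus B$, of a proper such function on $A\oplus B$, which controls the $A\oplus B$-direction, and a conditionally negative definite function on $G$ restricting to a proper one on the central subgroup $A\otimes B$. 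Producing this second summand and verifying that the sum is genuinely proper on all of $G$ --- where the centrality of $A\otimes B$ is exactly what is needed --- is the step I expect to demand the most care.
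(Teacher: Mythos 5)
Your ``only if'' direction and your handling of nilpotency, amenability, exactness and property (T) are correct and essentially coincide with the paper's proof (for nilpotency the paper computes the lower central series of $G$ explicitly and gets the sharper statement that the class is exactly $\max\{2,n,m\}$, but your central-extension bound suffices for the equivalence). The genuine gap is the Haagerup case. Your argument rests on the lemma that ``a central extension of a Haagerup group by an abelian group is again Haagerup.'' No such theorem is available: stability of the Haagerup property under central extensions is a well-known open problem (raised among the open questions in \cite{CCJJV}; even central extensions by $\mathbb Z$ are unsettled in general). Observing that the relative property (T) obstruction disappears because the kernel is central does not constitute a proof, and your sketch leaves exactly the unknown point unresolved: you need a conditionally negative definite function on $G=A \stackrel{_2}{\ast} B$ whose restriction to the central subgroup $A\otimes B$ is proper, and neither the centrality of the kernel nor a proper function on the quotient $A\oplus B$ gives any mechanism for producing one. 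As written, this step is not a proof but a restatement of the difficulty.

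The paper avoids the issue by exchanging the roles of kernel and quotient: the correct stability result is asymmetric, namely the Haagerup property is preserved under extensions with \emph{amenable quotient} (\cite[Example 6.1.6]{CCJJV}), not with amenable kernel. So instead of \eqref{SESeq} it uses $1\to[G,G]\to G\to\nicefrac{G}{[G,G]}\to 1$: by Proposition \ref{[G,G]}, $[G,G]\cong[A,A]\oplus[B,B]\oplus[A,B]^{(2)}$ is a direct sum of two subgroups of Haagerup groups and an abelian group, hence is Haagerup, while the quotient is abelian, hence amenable. If you prefer to keep your two-summand scheme, the missing summand can in fact be manufactured from the specific structure of $G$ rather than from general central-extension considerations: the functorial projection $G\to\nicefrac{A}{[A,A]}\stackrel{_2}{\ast}\nicefrac{B}{[B,B]}$ restricts to an isomorphism on $A\otimes B$ (Propositions \ref{EqualitoOfTensorProducts} and \ref{tensor}), and its target is nilpotent of class at most $2$, hence amenable, hence Haagerup; pulling back a proper conditionally negative definite function along this projection yields a conditionally negative definite function on $G$ that is proper on $A\otimes B$, and then your sum argument goes through. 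Either way, some input beyond the central extension \eqref{SESeq} itself is indispensable.
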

\begin{remark}
\hfill
 \begin{enumerate}
\item[(i)] We refer  to \cite{BDV,BO,CCJJV} for the definitions and thorough treatments of the properties of groups stated above.
\item[(ii)] Golovin showed that the second nilpotent product preserves nilpotency. For the sake of completeness, we include a short proof here.
\item[(iii)] Except for nilpotency, Proposition \ref{StabProp} is obvious and not interesting when the groups at hand are finite. 
\end{enumerate}
\end{remark}
\begin{proof}
Since $A$ and $B$ are subgroups of $G:=A \stackrel{_2}{\ast} B$ and the properties (\ref{nil}), (\ref{amen}), (\ref{haag}) and (\ref{exact}) are inherited by subgroups, it follows that if $G$ satisfies one of these four properties then both $A$ and $B$ must also satisfy it. The fact that Property (T) is inherited by quotients (see  \cite[Theorem 1.3.4 ]{BDV}) tells that if $G$ satisfies (\ref{propT}) then $A\simeq\nicefrac {G}{\overline{B}}$ and $B\simeq\nicefrac {G}{\overline{A}}$ must also satisfy (\ref{propT}), where $\overline{A}$ denotes the normal closure of $A$ in $G$. We are now left to show the reverse implications.

In order to prove (\ref{nil}), let $h\in [G,G]$ and $g\in G$, $g=abc$ as in Proposition \ref{abc}. By Proposition \ref{[G,G]}, $h$ is of the form $h=a_1b_1c_1$, with $a_1\in[A,A],\, b_1\in[B,B],\, c_1\in [A,B]^{(2)}$.
Then $[g,h]=[abc,a_1b_1c_1]=[ab,a_1b_1]=[a,a_1][b,b_1]$. This means that
the third term in the lower central series of $G$,  $G_3:=[G,[G,G]]$ is equal to $[A,[A,A]]\oplus [B,[B,B]]=A_3\oplus B_3$. It follows by induction that for $n>2$ the $n^{th}$ term of the lower central series of $G$ is equal to $A_n\oplus B_n$. In particular, if $A$ and $B$ are nilpotent groups of classes $n$ and $m$, then $G$ is either abelian or nilpotent of class $\max\{2,n,m \}$. 

In order to prove (\ref{amen}), we consider the short exact sequence in (\ref{SESeq}).
If $A$ and $B$ are amenable, then $A\oplus B$ is amenable. Since $A \otimes B$ is abelian, by   \cite[Proposition G.2.2]{BDV} $G$ is amenable.

In order to prove (\ref{haag}), recall that the Haagerup property is preserved by taking subgroups and direct sums, thus by Proposition \ref{[G,G]} the subgroup $[G,G]$ has the Haagerup property if both $A$ and $B$ have it. Since $\nicefrac {G}{[G,G]}$ is abelian, and extensions with amenable quotients preserve the Haagerup property (\cite[Example 6.1.6]{CCJJV}), then $G$ has the Haagerup property.

In order to prove (\ref{exact}),  we consider the short exact sequence in (\ref{SESeq}). Then recall that abelian groups are exact, and that subgroups and extensions of exact groups are exact \cite[Proposition 5.1.11]{BO}. 

Finally, in order to show (\ref{propT}), assume that both $A$ and $B$ have the Property (T). Their abelianizations are finite groups. Then by Proposition \ref{EqualitoOfTensorProducts}, $A\otimes B$ is a finite group. In particular both ends of the short exact sequence (\ref{SESeq}) have the Property (T). We apply then \cite[Proposition 1.7.6]{BDV} to obtain (\ref{propT}).
\end{proof}

\begin{remark} Proposition \ref{abc}, Proposition \ref{tensor} together with the proof of (\ref{nil}) shows that the 2-nilpotent product of finite abelian $p$-groups is a finite $p$-group of nilpotency class $2$. \\
It might seem that (\ref{propT}) could be used to construct property (T) groups with large center. Unfortunately this is not the case since we proved that if $A$ and $B$ have the  property (T) the group $[A,B]^{(2)}$ is finite.
\end{remark}
\section{Second nilpotent product indexed by a set}
In this section we consider an index set $\mathcal I$ and for each $i\in \mathcal I$, a group $H_i$. Recall from the introduction the next definition.
\begin{defi}
For a family of groups $\{H_i\}_{i \in \mathcal I}$ indexed on a set $\mathcal I$, the second nilpotent product of the family is the group
$$\2P {i \in \mathcal I} H_i := \nicefrac{\PR {i \in \mathcal I} H_i}{\big\langle [\PR {i \in \mathcal I} H_i,[H_j,H_k]]_{j \neq k}\big\rangle}$$
\end{defi}
The remainder of this section presents several facts about the 2-nilpotent product of arbitrarily many groups that will be needed to prove Corollary B and Theorems C and D. This will also enable us to give a short proof of the associativity of the second nilpotent product, the only one among the five properties listed in the introduction and proved by Golovin that does not follow immediately from the definitions.

\begin{proposition} The second nilpotent product is functorial.  Explicitly, a family of morphisms $H_i \to K_i$ induce a natural morphism
\mbox{$\2P {i \in \mathcal I} H_i \to \2P {i \in \mathcal I} K_i$}.
\end{proposition}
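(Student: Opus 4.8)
The plan is to obtain the morphism as a descent of the functoriality of the free product, which is immediate from its universal property. Write $f_i : H_i \to K_i$ for the given family of homomorphisms. First I would invoke the universal property of the free product to produce the unique homomorphism $F : \PR {i \in \mathcal I} H_i \to \PR {i \in \mathcal I} K_i$ whose restriction to each factor $H_i$ is $f_i$. Letting $q_K : \PR {i \in \mathcal I} K_i \to \2P {i \in \mathcal I} K_i$ be the defining quotient map, I would then form the composite $q_K \circ F$ and show that it factors through the quotient that defines $\2P {i \in \mathcal I} H_i$; the factorization it induces is the desired arrow.

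The content of the argument is therefore to check that $q_K \circ F$ annihilates the normal subgroup $N_H := \big\langle [\PR {i \in \mathcal I} H_i,[H_j,H_k]]_{j \neq k}\big\rangle$. Since $q_K\circ F$ is a homomorphism, it carries the normal closure of a set into the normal closure of the image of that set; hence it suffices to verify that each generator of $N_H$ dies. Take $x \in \PR {i \in \mathcal I} H_i$ and $w \in [H_j,H_k]$ with $j \neq k$. Then
\[
q_K\big(F([x,w])\big) = \big[q_K(F(x)),\,q_K(F(w))\big].
\]
Because $F$ is a homomorphism restricting to $f_j,f_k$ on $H_j,H_k$, it maps $[H_j,H_k]$ into $[K_j,K_k]$, so $F(w)\in[K_j,K_k]$; and by the very definition of $\2P {i \in \mathcal I} K_i$ the subgroup $[K_j,K_k]$ with $j\neq k$ becomes central under $q_K$. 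Thus the right-hand side is trivial, giving $N_H \subseteq \ker(q_K\circ F)$, and the universal property of the quotient yields the induced homomorphism $\2P {i \in \mathcal I} H_i \to \2P {i \in \mathcal I} K_i$.

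For naturality I would appeal to uniqueness: the induced map is characterized as the unique homomorphism commuting with the two quotient maps and agreeing with $f_i$ on each factor. It follows formally that the identity family induces the identity and that a composite family $g_i\circ f_i$ induces the composite of the induced maps, since both candidates solve the same factorization problem and are therefore equal. The only step that genuinely requires care is the reduction in the second paragraph, namely the passage from ``$q_K\circ F$ kills the generators'' to ``$q_K\circ F$ kills the full normal subgroup $N_H$''; this rests on the standard fact that a homomorphism sends the normal closure of a set into the normal closure of its image, and once that is granted the centrality built into $\2P {i \in \mathcal I} K_i$ does the rest. I do not expect any real obstacle, as the statement is essentially the assertion that the defining relations of the target are compatible with those of the source.
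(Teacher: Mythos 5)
Your proof is correct and follows the same route as the paper: the paper disposes of this proposition in one line ("a straightforward consequence of the functoriality of the free product"), and your argument is precisely that functoriality, with the descent to the quotients---checking that $q_K\circ F$ kills the generators $[x,w]$ because $F$ carries $[H_j,H_k]$ into $[K_j,K_k]$, which is central in the target---spelled out in full. No gaps; you have simply made explicit what the paper leaves implicit.
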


\begin{proof}
This is a straightforward consequence of the functoriality of the free product.
\end{proof}

A universal property for the second nilpotent product is given by the the following diagram
\begin{equation}
 \xymatrix{H_i \ar[drr]^{r_i} \ar[d] \\ \2P{i\in \mathcal I}{H_i} \ar@{.>}[rr]^{\exists!} & & G}
\end{equation}
\noindent where $G$ is a group and the morphisms $r_i$ verify $[r_i(g),r_j(h)] \in \mathcal Z(G)$ if $i \neq j$.

\vskip .2in
The group generated by the commutators $[h_i,h_j]$ with $h_i\in H_i$, $h_j\in H_j$, $i\neq j$
is central in $\2P {i \in \mathcal I} H_i$. Let fix a total order on $\mathcal I$. We have that
$$\langle[h_i,h_j] | h_i\in H_i, h_j\in H_j, i \neq j \rangle= \langle[H_i,H_j]^{(2)} | i \neq j \rangle =
\bigoplus_{\substack{i, j \in \mathcal I \\ i< j}} [H_i,H_j]^{(2)}.$$
This, together with Proposition \ref{tensor} immediately imply the next generalization of Proposition \ref{tensor}.
\begin{proposition}\label{sumoftensors} 
Let $\{H_i\}_{i \in \mathcal I}$ be a family of groups indexed on an ordered set $\mathcal I$.
The subgroup $\langle[h_i,h_j] | h_i\in H_i, h_j\in H_j, i \neq j \rangle$ of the group $\2P {i \in \mathcal I} H_i $ is central and it is 
isomorphic to  $$\bigoplus_{\substack{i, j \in \mathcal I \\ i< j}} H_i \otimes H_j.$$
\end{proposition}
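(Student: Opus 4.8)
The plan is to establish the claimed decomposition in two stages: first identify the central subgroup generated by all mixed commutators with the internal direct sum of the two-group commutator subgroups $[H_i,H_j]^{(2)}$ taken over ordered pairs $i<j$, and then apply Proposition \ref{tensor} termwise to replace each $[H_i,H_j]^{(2)}$ by $H_i\otimes H_j$. The centrality is immediate from the very definition of the second nilpotent product, since we have quotiented $\PR{i\in\mathcal I}H_i$ precisely by the normal closure of $[\PR{i\in\mathcal I}H_i,[H_j,H_k]]$ for $j\neq k$, which forces every mixed commutator $[h_i,h_j]$ to be central. So the real content is the internal direct sum decomposition.

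To prove the decomposition, I would first argue that the subgroup $N:=\langle[h_i,h_j]\mid h_i\in H_i,\,h_j\in H_j,\,i\neq j\rangle$ is generated by the family of subgroups $\{[H_i,H_j]^{(2)}\}_{i<j}$. This requires noting that $[h_j,h_i]=[h_i,h_j]^{-1}$, so each unordered pair contributes a single subgroup and we may restrict to $i<j$ using the chosen total order; moreover, by Proposition \ref{morfismo} each $[H_i,H_j]^{(2)}$ is already a subgroup (the commutators with fixed first or second slot multiply correctly), so $N$ is the product of these subgroups. Since $N$ is central, it is abelian and this product is an internal sum of abelian subgroups. It remains to show the sum is direct, i.e. that a relation $\prod_{i<j}c_{ij}=1$ with $c_{ij}\in[H_i,H_j]^{(2)}$ forces every $c_{ij}=1$.

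The directness I would obtain by constructing enough projections. The natural tool is the analogue of the normal form from Proposition \ref{abc}: an element of $\2P{i\in\mathcal I}H_i$ can be written uniquely as $\big(\prod_i a_i\big)c$ with $a_i\in H_i$ almost all trivial and $c\in N$, and within $N$ one reads off the $[H_i,H_j]^{(2)}$-components by a projection argument. Concretely, to isolate the component in a fixed $[H_i,H_j]^{(2)}$, I would map $\2P{k\in\mathcal I}H_k$ onto $H_i\stackrel{_2}{\ast}H_j$ by killing all factors $H_k$ with $k\notin\{i,j\}$ (this is a well-defined quotient morphism, since collapsing those factors respects the defining relations), and then compose with the retraction $H_i\stackrel{_2}{\ast}H_j\to[H_i,H_j]^{(2)}$ coming from Proposition \ref{abc}. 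This projection sends $c_{ij}$ to itself and annihilates every other $c_{kl}$, giving $c_{ij}=1$ for each pair and hence directness. Finally, applying the isomorphism $[H_i,H_j]^{(2)}\cong H_i\otimes H_j$ of Proposition \ref{tensor} to each summand yields the stated formula.

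The step I expect to be the main obstacle is making the projection onto a single $[H_i,H_j]^{(2)}$ fully rigorous when $\mathcal I$ is infinite: one must check that the maps killing all but two factors are genuinely well-defined on the quotient $\2P{k\in\mathcal I}H_k$ (that they respect the full system of centralizing relations), and that the composite retraction behaves coherently so that distinct summands are separated. Everything else is bookkeeping with the total order and the already-established two-factor theory, but this well-definedness and the passage to the infinite direct sum is where care is genuinely needed.
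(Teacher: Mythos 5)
Your proposal is correct and follows essentially the same route as the paper: centrality straight from the defining relations, the decomposition of the mixed-commutator subgroup as a direct sum of the $[H_i,H_j]^{(2)}$ over $i<j$, and then a termwise application of Proposition \ref{tensor}. The directness argument you spell out via the projections $\pi_{(i,j)}$ (well defined by functoriality) is exactly the justification the paper leaves implicit, since it introduces those projections immediately after the proposition and uses them the same way in Proposition \ref{escritura}.
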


Observe that because of functoriality, for any subset $\mathcal S \subset \mathcal I$ the natural projection
$\2P {i \in \mathcal I} H_i \xto {\pi_{\mathcal S}} \2P {i \in \mathcal S} H_i$
is a well defined group homomorphism. Moreover the projection $\pi_\mathcal S(x)$ can be computed in any word $x$ by erasing all letters belonging to a group $H_i$ whose index $i \notin \mathcal S$.
Thus, for $\mathcal S \subset \mathcal J \subset \mathcal I$ the composition of the two natural projections coincides with projecting
directly to $\mathcal S$.

For two elements $i \neq j \in \mathcal I$ we denote with $\pi_{(i,j)}$ the projection
\mbox{$\2P {l \in \mathcal I} H_l \xto{\pi_{(i,j)}} H_i \stackrel{_2}{\ast} H_j$}.
\vskip .1in

\begin{proposition} \label{escritura}
 Fixing a total order in $\mathcal I$, every element $x \in \2P {i \in \mathcal I} H_i$ admits a unique
 representation $$x = a_{i_1}a_{i_2}...a_{i_l}\,\omega$$

\noindent where $a_{i_k}\in H_{i_k}$, $i_1<i_2<...<i_l\in\mathcal I$ , and
$\omega \in\mathcal Z(\2P {i \in \mathcal I} H_i)$
is of the form $$\omega=\prod_{\substack{i_{k}< i_{r}\\ r\leq l}} c_{i_{k},i_{r}}
\text { with }c_{i_{k},i_{r}}\in[H_{i_{k}},H_{i_{r}}]^{(2)} .$$
\end{proposition}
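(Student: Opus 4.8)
The plan is to prove existence and uniqueness of the normal form separately, in each case reducing the statement for an arbitrary family $\{H_i\}_{i\in\mathcal I}$ to the two-group case already handled in Proposition~\ref{abc}, and to the structure of the central subgroup identified in Proposition~\ref{sumoftensors}.

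For \emph{existence}, I would argue exactly as in the proof of Proposition~\ref{abc}, but now inside $\PR{i\in\mathcal I}H_i$. Any element $x$ is a word in letters from the various $H_i$. Using the defining relation that each commutator $[h_i,h_j]$ with $i\neq j$ is central in $\2P{i\in\mathcal I}H_i$, together with the identity $h_j h_i=[h_j,h_i]h_i h_j$, I can transport letters past one another at the cost of throwing a central commutator into the ``tail''. Repeatedly swapping adjacent out-of-order letters (a bubble-sort argument) lets me gather all letters belonging to each fixed $H_{i_k}$ together and arrange the blocks in increasing order of index; within a single block the letters multiply inside $H_{i_k}$ to a single element $a_{i_k}\in H_{i_k}$. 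What remains is a product of central commutators, which by Proposition~\ref{sumoftensors} lies in $\bigoplus_{i<j}[H_i,H_j]^{(2)}$ and can therefore be written in the stated form $\omega=\prod_{i_k<i_r,\,r\le l}c_{i_k,i_r}$ with $c_{i_k,i_r}\in[H_{i_k},H_{i_r}]^{(2)}$. Since each $x$ involves only finitely many letters, only finitely many indices $i_1<\cdots<i_l$ occur, so everything is a finite product and well defined.

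For \emph{uniqueness}, the tool is the family of projections. Given a representation $x=a_{i_1}\cdots a_{i_l}\,\omega$, I recover each noncentral factor $a_{i_k}$ by applying the single-index projection $\pi_{\{i_k\}}\colon \2P{i\in\mathcal I}H_i\to H_{i_k}$: since $\omega$ is central it lies in the commutator part and is killed by $\pi_{\{i_k\}}$ (whose image $H_{i_k}$ has trivial tensor contribution to itself), while the other $a_{i_r}$ with $r\neq k$ are erased because their indices are discarded. This pins down the factors $a_{i_k}$ uniquely, hence also pins down $\omega$. To see that the central part itself has a unique decomposition into its $c_{i_k,i_r}$, I apply the two-index projections $\pi_{(i_k,i_r)}\colon \2P{l\in\mathcal I}H_l\to H_{i_k}\stackrel{_2}{\ast}H_{i_r}$: this sends $\omega$ into $[H_{i_k},H_{i_r}]^{(2)}$ and isolates the single factor $c_{i_k,i_r}$, the other central factors being mapped to the identity since they involve at least one index outside $\{i_k,i_r\}$. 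Uniqueness of $c_{i_k,i_r}$ then follows from the uniqueness already established for the two-group second nilpotent product in Proposition~\ref{abc}, combined with the direct-sum decomposition of Proposition~\ref{sumoftensors}.

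The step I expect to require the most care is verifying that the projections behave as claimed on the central commutators, i.e.\ that $\pi_{(i_k,i_r)}$ really does annihilate every $c_{i_s,i_t}$ with $\{i_s,i_t\}\neq\{i_k,i_r\}$ and acts as the identity on $c_{i_k,i_r}$. This is where the ``erase letters outside $\mathcal S$'' description of $\pi_{\mathcal S}$ from the preceding discussion does the work: a commutator $[h_{i_s},h_{i_t}]$ loses at least one of its two letters under $\pi_{(i_k,i_r)}$ whenever one of $i_s,i_t$ lies outside $\{i_k,i_r\}$, collapsing to the identity, so the direct-sum structure of the central subgroup is exactly matched by the coordinate projections. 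Once that compatibility is in hand, the uniqueness is immediate, and the proposition follows.
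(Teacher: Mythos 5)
Your proposal is correct and follows essentially the same route as the paper: existence by rearranging letters at the cost of central commutators, and uniqueness via the projections $\pi_{(i_k,i_r)}$ combined with Proposition~\ref{abc} (the paper's proof is exactly this, stated in two lines). Your additional use of the single-index projections $\pi_{\{i_k\}}$ to recover the $a_{i_k}$ is harmless but redundant, since the two-index projections already determine $a_{i_k}$, $a_{i_r}$ and $c_{i_k,i_r}$ simultaneously.
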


\begin{proof}

\noindent  Existence: Start from a word $P$ that represents $x$. By rearranging the elements (adding the
  corresponding commutators) we can obtain such a representation.
Uniqueness: by Proposition \ref{abc}, the projection $\pi_{(i_{k},i_{r})}(x)$ determines $a_{i_k}$, $a_{i_r}$ and $c_{i_{k},i_{r}}$.
\end{proof}

This result, combined with Proposition \ref{sumoftensors}, can be used to compute the order of the second nilpotent product of finitely many finite groups.
\begin{example} $\2P {1\leq i\leq n} \nicefrac{\mathbb{Z}}{p\mathbb{Z}}$ is the universal nil-2 exponent $p$ group in $n$ generators.
It has order $p^{\frac {n^2+n}{2}}$, and its derived subgroup has order $p^{\frac {n^2-n}{2}}$ and it is isomorphic to 
$\bigoplus_{1}^{\frac{n^2-n}{2}}\nicefrac{\mathbb{Z}}{p\mathbb{Z}}$.
\end{example}

\begin{proposition}  \label{morfismo - caso general}
Let $\mathcal J\subset \mathcal I$. For $x \in \2P {j \in \mathcal J} H_j$, the commutator $[x,-]$ defines a group homomorphism
$$\2P {i \in \mathcal I \setminus \mathcal J} H_i  \xto {[x,-]} \bigoplus_{{i \in \mathcal I\setminus \mathcal J,\, j\in\mathcal J}} [H_i,H_j]^{(2)}\ \subset \mathcal Z(\2P {i\in \mathcal I} H_i).$$
\end{proposition}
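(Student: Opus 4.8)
The plan is to reduce everything to a single fact already contained in Proposition~\ref{sumoftensors}: a commutator $[a,b]$ with $a\in H_j$, $b\in H_i$ and $i\neq j$ is central in $G:=\2P{i\in\mathcal I}H_i$ and lies in $[H_i,H_j]^{(2)}$. Throughout I regard $\2P{j\in\mathcal J}H_j$ and $\2P{i\in\mathcal I\setminus\mathcal J}H_i$ as the subgroups $M:=\langle H_j : j\in\mathcal J\rangle$ and $N:=\langle H_i : i\in\mathcal I\setminus\mathcal J\rangle$ of $G$; that these really are copies of the smaller nilpotent products is read off from the normal form of Proposition~\ref{escritura}, since the projection $\pi_{\mathcal J}$ (resp.\ $\pi_{\mathcal I\setminus\mathcal J}$) restricts to an isomorphism on $M$ (resp.\ $N$). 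We are given $x\in M$ and want to understand $[x,-]$ on $N$.

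The core step is to show that $[x,y]$ is central and lies in $\bigoplus_{i\in\mathcal I\setminus\mathcal J,\,j\in\mathcal J}[H_i,H_j]^{(2)}$ for every $x\in M$ and $y\in N$. I would prove this by a double induction on the lengths of $x$ and $y$ as words in the generators $\bigcup_{j\in\mathcal J}H_j$ and $\bigcup_{i\in\mathcal I\setminus\mathcal J}H_i$ respectively, driven by the standard commutator identities
\begin{equation*}
[x_1x_2,y]=x_1[x_2,y]x_1^{-1}\,[x_1,y]\qquad\text{and}\qquad [x,y_1y_2]=[x,y_1]\,y_1[x,y_2]y_1^{-1}.
\end{equation*}
The induction hypothesis is that the shorter commutators are products of generator-commutators $[a,b]$ with $a\in H_j$, $b\in H_i$, hence central by Proposition~\ref{sumoftensors}; this centrality makes the conjugations in both identities disappear, so each identity collapses to a plain product and the inductive step goes through, producing again a product of such generator-commutators. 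The base case is precisely the cited fact. Any central factors $\omega$ arising from the normal forms of $x$ and $y$ commute past everything and cancel inside the commutator, so they may be discarded from the outset and one may assume $x,y$ are honest words in the generators.

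Once $[x,y]$ is known to be central, the homomorphism property is immediate: in the second identity the factor $y_1[x,y_2]y_1^{-1}$ equals $[x,y_2]$, so $[x,y_1y_2]=[x,y_1][x,y_2]$, which shows that $[x,-]\colon N\to G$ is a group homomorphism with image inside the asserted central subgroup. As an optional cross-check one can recover the image componentwise: applying $\pi_{(i,j)}$ kills $[x,y]$ whenever $i,j$ lie on the same side of the partition $\mathcal I=\mathcal J\sqcup(\mathcal I\setminus\mathcal J)$ (one of the two projected factors becomes trivial) and returns an element of $[H_i,H_j]^{(2)}$ otherwise, confirming that only the ``mixed'' summands can occur.

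I expect the only delicate point to be the bookkeeping of the double induction, namely ensuring that at each stage the commutators being conjugated are already known to be central, so that the conjugations drop out and the identities linearize; there is no genuine difficulty beyond this, as all the algebra is governed by the two commutator identities together with the centrality supplied by Proposition~\ref{sumoftensors}.
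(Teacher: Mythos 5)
Your proof is correct and follows essentially the same route as the paper's: both reduce the homomorphism property to centrality of the commutators via the identity $[x,yz]=[x,y]y[x,z]y^{-1}$, and both establish centrality by induction on word lengths (the paper first fixes $x$ in a single factor $H_j$ and inducts on the length of $z$, then repeats the induction over the length of $x$, which is exactly your double induction). The only difference is cosmetic: you explicitly track that the commutators are products of mixed generator-commutators, hence land in $\bigoplus_{i\in\mathcal I\setminus\mathcal J,\,j\in\mathcal J}[H_i,H_j]^{(2)}$, a point the paper leaves implicit.
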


\begin{proof}
Since the identity $[x,yz] = [x,y]y[x,z]y^{-1}$ is always valid, it is enough to show that $[x,z]\in \mathcal Z(\2P {i \in \mathcal I} H_i)$ whenever $x \in \2P {j \in \mathcal J} H_j$ and $z\in\2P {i \in \mathcal I \setminus \mathcal J} H_i$.
Assume first that $x\in H_j$, $j\in\mathcal J$. Any $z\in \2P {i \in \mathcal I \setminus \mathcal J} H_i$ can be represented as a finite word $z_{i_1}z_{i_2}...z_{i_l}$, where for all $1\leq k\leq l$, $z_{i_k}\in H_{i_k}$ and  $i_k\not \in \mathcal J$.
 Since for all $k$, $[x,z_{i_k}]\in \mathcal Z(\2P {i \in \mathcal I} H_i)$, induction on $l$ shows that $[x,z]=[x,z_{i_1}z_{i_2}...z_{i_l}]=[x, z_{i_1}z_{i_2}...z_{i_{l-1}}][x,z_{i_l}]\in\mathcal
Z(\2P {i \in \mathcal I} H_i)$. Repeating the same induction argument but now for $x \in \2P {j \in \mathcal J} H_j$ finishes the proof.
\end{proof}

We can now give a short proof of the associativity of the second nilpotent product.

\begin{proposition}\cite[Golovin]{Gol1} \label{assoc}
  Let $S = \underset{i\in \mathcal I}{\coprod}S_i$ be a disjoint union of index sets, and $H_j$ a group for each $j \in S$.
  Then $$\2P {j \in S} H_j \simeq \2P {i \in \mathcal I} \Big(\2P {j \in S_i} H_j \Big)$$
\end{proposition}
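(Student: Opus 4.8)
The plan is to prove the isomorphism by exhibiting mutually inverse homomorphisms built from the universal property of the second nilpotent product. Write $P:=\2P {j\in S} H_j$, $N_i:=\2P {j\in S_i} H_j$ for each $i\in\mathcal I$, and $Q:=\2P {i\in\mathcal I} N_i$. Recall that each $H_j$ with $j\in S_i$ maps into $N_i$, and each $N_i$ maps into $Q$; composing gives canonical maps $H_j\to Q$, and symmetrically each $H_j$ maps into $P$. Since the factors $H_j$ generate both $P$ and $Q$, it suffices to produce homomorphisms $\Phi\colon P\to Q$ and $\Psi\colon Q\to P$ that restrict to the identity on each $H_j$: then $\Phi\Psi$ and $\mathrm{id}_Q$ (resp. $\Psi\Phi$ and $\mathrm{id}_P$) are endomorphisms agreeing on all generators, hence equal by the uniqueness clause of the universal property.

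To build $\Phi\colon P\to Q$ I would apply the universal property of $P$ to the maps $r_j\colon H_j\to Q$ just described. The hypothesis to check is that $[r_j(g),r_k(h)]\in\mathcal Z(Q)$ whenever $j\neq k$ in $S$. If $j$ and $k$ lie in different blocks, say $j\in S_i$ and $k\in S_{i'}$ with $i\neq i'$, then $r_j(g)$ and $r_k(h)$ sit in distinct factors $N_i,N_{i'}$ of the outer product, so their commutator is central in $Q$ by the very definition of the second nilpotent product. If instead $j,k\in S_i$ lie in the \emph{same} block, then $r_j(g),r_k(h)$ both lie in the single factor $N_i$, and $[g,h]\in[H_j,H_k]^{(2)}$ is central in $N_i$; I must still verify it is central in all of $Q$. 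It commutes with every element of $N_i$, being central there, and for $a$ in a different factor $N_{i'}$ I would expand, using that $[a,-]$ is a homomorphism into $\mathcal Z(Q)$ by Proposition \ref{morfismo - caso general}, the identity $[a,[g,h]]=[a,g][a,h][a,g]^{-1}[a,h]^{-1}=1$, the last equality holding because these four factors are central and hence commute. Thus $[g,h]\in\mathcal Z(Q)$ and $\Phi$ is well defined.

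For $\Psi\colon Q\to P$ I would first build, for each $i$, a homomorphism $N_i\to P$ from the universal property of $N_i$ applied to the inclusions $H_j\to P$ ($j\in S_i$); this is legitimate because the cross commutators $[H_j,H_k]$ with $j\neq k$ in $S_i$ are central in $P$ by definition. To assemble these maps into $\Psi$ via the universal property of $Q$, I then need $[u,v]\in\mathcal Z(P)$ whenever $u$ lies in the image of $N_i$ and $v$ in the image of $N_{i'}$ with $i\neq i'$; since these images are the sub-products $\2P {j\in S_i} H_j$ and $\2P {k\in S_{i'}} H_k$ of $P$ and the blocks $S_i,S_{i'}$ are disjoint, this is exactly Proposition \ref{morfismo - caso general} with $\mathcal J=S_i$. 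Both composites $\Phi\Psi$ and $\Psi\Phi$ then fix each $H_j$ by construction, so they are the respective identities. I expect the only genuinely delicate point to be the same-block case in the construction of $\Phi$: one must confirm that a commutator which is central in an inner factor $N_i$ remains central once $N_i$ is placed inside the outer nilpotent product, and this is precisely where the homomorphism property of $[a,-]$ from Proposition \ref{morfismo - caso general} (equivalently Corollary \ref{commuteswithcommutator}) is essential.
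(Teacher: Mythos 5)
Your proposal is correct and follows essentially the same route as the paper's proof: both directions are induced from the universal property of the second nilpotent product, the only delicate point being the same-block case, which you resolve exactly as the paper does by invoking Proposition \ref{morfismo - caso general} to expand $[a,[g,h]]$ into four central factors that cancel, and the reverse map is likewise justified by Proposition \ref{morfismo - caso general} applied to disjoint blocks. The mutual inverse argument via agreement on the generating subgroups $H_j$ also matches the paper's reasoning.
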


\begin{proof}
The isomorphism will be given by the identity on each $H_j$. As these generate both groups, the only nontrivial fact is that they are well defined.
We will induce them with the help of the universal property of the second nilpotent product. Natural inclusions give the following diagram

 $$\xymatrix{H_k \ar[d]_{r_k} \ar[rr]^{l_k} & & \2P{j \in {S_{i(k)}}} H_j \ar[dll]_{s_{i(k)}} \ar[d]^{t_{i(k)}} \\
 \2P {j \in S} H_j \ar@<.7ex>@{.>}[rr]^{u} &  &  \2P {i \in \mathcal I} \Big(\2P {j \in S_i} H_j \Big) \ar@<.7ex>@{.>}[ll]^{v} } $$

In order to see that $u$ is well defined, we must check that for every $\alpha \in H_{k_1}$ and every $\beta \in H_{k_2}$ with $k_1 \neq k_2$, the element
  $[t_{i(k_1)} l_{k_1}(\alpha), t_{i(k_2)} l_{k_2}(\beta)]$ belongs to the center of 
$\2P {i \in \mathcal I} \Big(\2P {j \in S_i} H_j \Big)$. In the case when $i(k_1) \neq i(k_2)$, 
this follows from the definition of the second nilpotent product of the family of groups $\left\{\2P {j \in S_i}H_j\right\}_{i\in\mathcal I}$.
 We are left to analyze the case when $i(k_1) = i(k_2)$.
In this case, 
 $[t_{i(k_1)} l_{k_1}(\alpha), t_{i(k_1)} l_{k_2}(\beta)]= t_{i(k_1)}\left([ l_{k_1}(\alpha), l_{k_2}(\beta)]
\right)$.
The definition of the second nilpotent product of the family $\left\{H_j\right\}_{ j\in i(k_1)}$ entails that 
our element is in the center of $\2P{j \in {S_{i(k_{1})}}} H_j$.
 On the other hand, if in Proposition \ref{morfismo - caso general} we take $\mathcal J=\mathcal I\setminus\left\{i(k_{1})\right\}$,
 it follows that for any $x\in  \2P{i \in \mathcal J} \Big(\2P {j \in S_i} H_j \Big)$
  we have that
 \begin{align*}
& [x,[t_{i(k_1)} l_{k_1}(\alpha), t_{i(k_1)} l_{k_2}(\beta)]]=\\
&= [x,t_{i(k_1)} l_{k_1}(\alpha)][x, t_{i(k_1)} l_{k_2}(\beta)]
 [x,(t_{i(k_1)} l_{k_1}(\alpha))^{-1}][x, (t_{i(k_1)} l_{k_2}(\beta))^{-1}]\\
 &= [x,t_{i(k_1)} l_{k_1}(\alpha)] [x,(t_{i(k_1)} l_{k_1}(\alpha))^{-1}][x, t_{i(k_1)} l_{k_2}(\beta)]
[x, (t_{i(k_1)} l_{k_2}(\beta))^{-1}]\\
&=1.
 \end{align*}
 All this combined implies that $[t_{i(k_1)} l_{k_1}(\alpha), t_{i(k_1)} l_{k_2}(\beta)]$ is central in the group
 $\2P {i \in \mathcal I} \Big(\2P {j \in S_i} H_j \Big)$.

In order to see that $v$ is well defined, we must  check that $[s_{i_1}(\gamma), s_{i_2}(\delta)] \in \mathcal Z \Big( \2P {j \in S} H_j \Big)$, where
 \mbox{$\gamma \in \2P {j \in S_{i_1}} H_j$}, $\delta \in \2P {j \in S_{i_2}} H_j$, $i_1 \neq i_2$. 
To that end, consider $\gamma=\gamma_{j_1}\ldots\gamma_{j_n}$ with $j_1,\cdots,j_n\in S_{i_{1}}$ and $\delta =  \delta_{j^\prime_{1}}\ldots, \delta_{j^\prime_{m}}$
with $j^{\prime}_{1},\cdots,j^\prime_{m}\in S_{i_{2}}$.  Once again, by Proposition \ref{morfismo - caso general}, we have
$$[s_{i_1}(\gamma), s_{i_2}(\delta)] = \prod_{\substack{1\leq k\leq n\\1\leq l\leq m}} [\gamma_{j_k},\delta_{j^{\prime}_{l}}],$$ 
and this product clearly belongs to the center.
\end{proof}
We are now in position to prove Corollary \ref{amenability of many products} from the introduction.
\begin{proof}[Proof of Corollary B] If $\mathcal I$ is finite, the result follows from associativity together with Proposition \ref{StabProp}. If $\mathcal I=\mathbb N$, then 
$$\2P {i\in\mathbb N } H_i=\bigcup_{n\in\mathbb N} \2P {i\in\{1,2,\ldots,n\} } H_i$$
and amenability, the Haagerup property and exactness are preserved under countable increasing unions of discrete groups (see \cite[Proposition G.2.2]{BDV}, \cite[Proposition 6.1.1]{CCJJV} and \cite[Exercise 5.1.1]{BO}). 
\end{proof}

\begin{remark} Property (T) is not stable under taking the second nilpotent product of infinitely many discrete groups.
 This is because such a group is not finitely generated.
\end{remark}

\section{second nilpotent wreath products}\label{sec5}

Let $H$ and $G$ be two countable groups. We consider the second nilpotent product of 
$|G|$-many copies of $H$ indexed by $G$, namely, we consider the group
$\2P {g \in G} H_g$, where for each $g$, $H_g=H$. 

Since the shift action of $G$ on the free product $\PR {G} H_g$ leaves
the set $\big \{ [\PR {g \in  G} H_g,[H_h,H_k]]_{h \neq k}\big \}$ invariant, it follows that this action passes to the factor group $\nicefrac{\PR {g \in G} H_g}{\big\langle [\PR {g \in  G} H_g,[H_h,H_k]]_{h \neq k}\big\rangle}$. In other words $G$ acts on $\2P {g\in G} H_g$. 
\begin{defi}The semi-direct product  
$$\Big (\2P {G}H\Big )\rtimes G$$  will be called {\it the restricted second nilpotent wreath product} of $H$ and $G$.
\end{defi}

A variant of this construction that motivated this article appeared in \cite[Section 5]{SasTor1}. 
The goal of this section is to show Theorem (C) 
and Theorem (D) 
 from the introduction.

\begin{defi}  \label{defsop}
The support of an element $x \in \2P {i \in \mathcal I} H_i$ is the subset of $\mathcal I$ whose elements are all the indices  $i$ such that one of the elements $a_i$, $c_{ij}, j\neq i$  in the representation of $x$ as in Proposition \ref{escritura}, is nontrivial. Equivalently, for every $x\neq e$,
$$supp(x)=\left \{i\in\mathcal I: \text{ \rm there exists } j\neq i \text{ \rm such that } \pi_{(i,j)}(x)\notin H_j \right\}.$$
The support of the identity element is the empty set.
\end{defi}
\begin{remark}\label{gauge} Some obvious properties of the support are:
\begin{enumerate}
\item $supp(x)$ is a finite set;
\item $supp(x)=supp(x^{-1})$;
\item $supp (xy)\subset supp (x)\cup supp(y)$;
\item \label{cuatro}when the index set is a group $G$,  
 it follows that $$supp(g.x)=g.supp(x),\text{ for all }g\in G
\text { and for all }x\in \2P {G} H.$$
\end{enumerate}
\end{remark}
The proof of Theorem (C) that we will exhibit here follows the general strategy developed by Cornulier et.al. in \cite{CSV2}.
In fact, the result will follow after setting up the premisses that allow us to apply  \cite[Theorem 5.1]{CSV2}. For the sake of completeness, we will include its statement. Let us first recall the next definition.

\begin{defi} \cite[Definition 3.3]{CSV2} Let $W$ be a group, and $X$ be a set. $\mathcal A=2^{(X)}$ denotes the set of finite subsets of $X$. A $W$-invariant $\mathcal A$-gauge on $W$ is a function $\psi: W\to\mathcal A$ such that 

$$\psi(w)=\psi(w^{-1})\,\,\,\,\,\,\,\forall w\in W;$$
 $$\psi (ww')\subset \psi (w)\cup \psi(w')\,\,\,\,\,\,\,\forall w,w\in W.$$
\end{defi}

\begin{example}  The first three items of Remark \ref{gauge} say that 
the support function is a $\2P {i \in \mathcal I} H_i$-invariant $ 2^{(\mathcal I)}$-gauge on $\2P {i \in \mathcal I} H_i$. While condition  (\ref{cuatro}) of Remark \ref{gauge} says that when the index set is a group $G$, the support function is $G$-equivariant.
\begin{theorem}\cite[Theorem 5.1]{CSV2}. Let $W, G$ be groups, with $G$ acting on $W$ by automorphisms. Set $\mathcal{A}=2^{(G)}$. Let $\psi$ be a left $W$-invariant, G-equivariant $\mathcal A$-gauge on $W$. Assume that there exists a $G$-invariant conditionally negative definite function $u$ on $W$ such that, for every finite subset $F\subset G$, the restriction of $u$ to every subset of the form $W_F:=\{w\in W:\psi(w)\subset F\}$ is proper. Then $W\rtimes G$ is a Haagerup group if and only if $G$ is Haagerup.
\end{theorem}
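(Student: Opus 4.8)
The plan is to prove the two implications separately, using the standard characterization that a countable group is Haagerup if and only if it carries a proper conditionally negative definite (cnd) function, equivalently a proper action on a measured wall space. The forward implication is immediate and uses none of the hypotheses on $\psi$ or $u$: the group $G$ embeds into $W\rtimes G$ as the subgroup $\{1\}\rtimes G$, and the Haagerup property passes to subgroups, so if $W\rtimes G$ is Haagerup then so is $G$.

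The substance is the reverse implication, so assume $G$ is Haagerup and fix a proper cnd function $\phi$ on $G$. I would construct a proper cnd function $\Psi$ on $W\rtimes G$ as a sum of three cnd functions, using that cnd functions form a cone closed under sums and that such a sum is proper as soon as, along every sequence escaping to infinity, at least one summand tends to infinity. The three pieces are: (a) the pullback $\Psi_1(w,g)=\phi(g)$ along the quotient $W\rtimes G\to G$, which is cnd and tends to infinity whenever $g\to\infty$; (b) the extension $\Psi_2(w,g)=u(w)$ of the given $G$-invariant cnd function $u$, which is cnd on $W\rtimes G$ precisely because $G$-invariance of $u$ equips the cocycle Hilbert space of $u$ with a compatible orthogonal $G$-action, turning the $W$-cocycle of $u$ into a cocycle for $W\rtimes G$; and (c) a support-sensitive cnd function $\Psi_3$ that tends to infinity whenever the finite set $\psi(w)\subset G$ escapes to infinity.

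To see that $\Psi=\Psi_1+\Psi_2+\Psi_3$ is proper, suppose $(w_n,g_n)\to\infty$ in $W\rtimes G$. If $g_n\to\infty$ then $\Psi_1\to\infty$. Otherwise the $g_n$ remain in a finite set and $w_n\to\infty$ in $W$; if in addition all $\psi(w_n)$ lie in a single finite $F\subset G$, then $w_n\to\infty$ inside $W_F$ and the assumed properness of $u|_{W_F}$ forces $\Psi_2=u(w_n)\to\infty$, whereas if the supports escape then $\Psi_3\to\infty$. As the three terms are individually cnd, $\Psi$ is cnd, and the case analysis shows it is proper, giving the Haagerup property of $W\rtimes G$.

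The hard part will be the construction of $\Psi_3$ in (c). The gauge is only subadditive, $\psi(ww')\subset\psi(w)\cup\psi(w')$, rather than additive, so the naive assignment $w\mapsto\sum_{t\in\psi(w)}\delta_t\in\ell^2(G)$ is not a $1$-cocycle and does not directly yield a cnd function. Moreover the decisive case is $g_n=1$ with $u(w_n)$ bounded but $\psi(w_n)$ wandering, which genuinely occurs, since $u$ is $G$-invariant and hence constant along the $G$-orbits whose supports sweep out to infinity; thus $\Psi_3$ must detect the \emph{location} of the support and cannot be recovered from $\phi$ and $u$ alone. The route I would take is to encode $\psi$ through the measured-wall formalism: using the $G$-equivariance $\psi(g\cdot w)=g\,\psi(w)$ together with a proper $G$-invariant measured wall structure on $G$ coming from $\phi$, one assembles a single $W\rtimes G$-invariant measured wall space in which the wall pseudo-distance from the origin to $(w,g)$ reads off both $g$ and the location of $\psi(w)$; here subadditivity of $\psi$ matches the triangle inequality for wall metrics, and properness on escaping supports follows from properness of the $G$-action on its own wall space. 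Verifying that this assembled structure is genuinely $W\rtimes G$-invariant and that the induced cnd function is proper on escaping supports is the main technical obstacle, whereas the extension in (b) and the pullback in (a) are then routine.
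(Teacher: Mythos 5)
First, a point of reference: the paper you are being compared against does not actually prove this statement---it is quoted verbatim from \cite[Theorem 5.1]{CSV2} precisely so that it can be used as a black box, so the only meaningful comparison is with Cornulier--Stalder--Valette's original argument. Against that benchmark, your outer architecture is correct and is essentially theirs in spirit. The forward implication via the subgroup $\{1\}\rtimes G$ is fine. Your claim (b) is correct and can even be checked without cocycles: writing $(w_1,g_1)^{-1}(w_2,g_2)=(g_1^{-1}\cdot(w_1^{-1}w_2),\,g_1^{-1}g_2)$, the $G$-invariance of $u$ gives $\Psi_2\bigl((w_1,g_1)^{-1}(w_2,g_2)\bigr)=u(w_1^{-1}w_2)$, so the conditionally negative definite kernel condition for $\Psi_2$ reduces to that of $u$ on $W$. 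Your three-case properness analysis ($g_n\to\infty$; supports confined to a fixed finite $F$, where properness of $u|_{W_F}$ applies; supports escaping) is also sound, and you correctly diagnose both why the naive $\ell^2$-cocycle on supports fails (subadditivity, not additivity, of $\psi$) and why some support-location-sensitive term is unavoidable ($u$ is constant along $G$-orbits).

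The genuine gap, which you yourself flag, is that $\Psi_3$ is never constructed---and that is where the entire content of the theorem lives; everything else in your write-up is routine. The missing idea that makes the measured-walls route go through is this: the gauge axioms force $W_F=\{w\in W:\psi(w)\subset F\}$ to be a \emph{subgroup} of $W$ for every $F\subset G$ (symmetry plus $\psi(ww')\subset\psi(w)\cup\psi(w')$ give closure under inverses and products, and $\psi(1)\subset\psi(w)$ for all $w$). This is what repairs the invariance failure you ran into: the sets $\{(w,g):\psi(w)\subset A,\ g\in A\}$ are indeed not permuted by left translations, but half-spaces indexed by \emph{pairs} $(A,\,xW_A)$---a half-space $A$ of a proper $G$-invariant measured walls structure on $G$ together with a coset $xW_A\in W/W_A$---defining $\{(w,g):w\in xW_A,\ g\in A\}$, with counting measure in the coset direction, \emph{are} permuted by $W\rtimes G$, exactly as in the labeled-walls construction for lamplighters in \cite{CSV1}. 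The resulting wall pseudo-distance from the identity to $(w,g)$ controls both the wall distance of $g$ and $\max_{t\in\psi(w)}d_{\mathrm{walls}}(1,t)$, so this single structure subsumes your $\Psi_1$ and $\Psi_3$ at once, and properness on escaping supports follows from properness of the walls on $G$; one then adds $u$ as in your step (b). Without the subgroup observation and the coset-indexed walls, your step (c) stays an announced obstacle rather than a proof, so the proposal as written is an accurate roadmap of \cite{CSV2} with its one nontrivial step left open.
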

\end{example}
So, in order to prove Theorem C from the introduction, it is enough to show the next proposition.
\begin{proposition}
Let $G$ and $H$ be discrete countable groups. Assume $H$ Haagerup. Then there exists a $G$-invariant and conditionally
negative definite function $\2P {g \in G} H \xto{u} \R$ such that for every finite subset $F\subset G$ the restriction of $u$ to any subset of the form 
$\{x\in \2P {g \in G} H: supp(x)\subset F\}$
is  proper.
\end{proposition}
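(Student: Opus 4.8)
The plan is to write $u$ explicitly as $u=u_1+u_2$, a \emph{base} part controlling the factor coordinates and a \emph{pairwise} part controlling the commutator (central) coordinates, arranged so that conditional negative definiteness is automatic and $G$-invariance is built in. Since $H$ is Haagerup I fix a proper conditionally negative definite function $\psi\colon H\to\R$; and since $H\stackrel{_2}{\ast}H$ is Haagerup by Proposition \ref{StabProp}, I fix a proper conditionally negative definite function $\eta\colon H\stackrel{_2}{\ast}H\to\R$. After replacing $\eta$ by $\eta+\eta\circ\mathrm{sw}$, where $\mathrm{sw}$ is the automorphism exchanging the two factors, I may assume $\eta$ is invariant under exchanging the two copies of $H$ (still proper and conditionally negative definite).

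For the base part, let $\pi_{\{g\}}\colon\2P{g\in G}H\to H$ be the projection onto the $g$-th factor, and set $u_1(x)=\sum_{g\in G}\psi(\pi_{\{g\}}(x))$. Each summand is the pullback of a conditionally negative definite function along a homomorphism, hence conditionally negative definite, and only finitely many terms are nonzero because $x$ has finite support; thus $u_1$ is well defined and conditionally negative definite. It is $G$-invariant since the shift only permutes the index $g$, and on $W_F:=\{x:supp(x)\subset F\}$ it equals the finite sum $\sum_{g\in F}\psi(\pi_{\{g\}}(x))$ of proper functions, hence is proper there; in particular $u_1$ already settles the case of singleton supports, where there are no commutators.

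The crux is the pairwise part. The naive attempt of reading off the central coordinate $c_{g,g'}(x)\in H\otimes H$ of Proposition \ref{escritura} and summing $\theta(c_{g,g'}(x))$ fails: the map $x\mapsto c_{g,g'}(x)$ is not a homomorphism but a $2$-cocycle for the central extension of Proposition \ref{sumoftensors}, so $\theta\circ c_{g,g'}$ need not be conditionally negative definite. I avoid this by using the \emph{genuine} homomorphisms $\pi_{(g,g')}\colon\2P{g\in G}H\to H\stackrel{_2}{\ast}H$, for which $\eta\circ\pi_{(g,g')}$ is conditionally negative definite for free, and which simultaneously see the two factor coordinates and their commutator. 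Fixing a symmetric summable weight $W_0\colon G\setminus\{e\}\to\R_{>0}$ (so $W_0(s)=W_0(s^{-1})$ and $\sum_s W_0(s)<\infty$), I set
$$u_2(x)=\sum_{\{g,g'\}}W_0(g^{-1}g')\,\eta\big(\pi_{(g,g')}(x)\big),$$
the sum over unordered pairs. Swap-invariance of $\eta$ makes each term well defined and positivity of the weights keeps $u_2$ conditionally negative definite as a pointwise-convergent positive combination. The one delicate point is convergence: a pair meeting $supp(x)$ in a single point $g$ contributes $W_0(g^{-1}g')\,\eta(\pi_{\{g\}}(x))$, and summing over the free index gives at most $\|W_0\|_1\sum_{g\in supp(x)}\eta(\pi_{\{g\}}(x))<\infty$, while the pairs contained in $supp(x)$ are finite in number. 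Finally $u_2$ is $G$-invariant because the shift by $t$ sends the $\{g,g'\}$-term to the $\{t^{-1}g,t^{-1}g'\}$-term and $W_0$ depends only on $g^{-1}g'$, so reindexing restores the sum; this is exactly why the weight is taken as a function of $g^{-1}g'$.

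I then take $u=u_1+u_2$, which is $G$-invariant and conditionally negative definite. For properness on $W_F$ with $F$ finite, assume $u$ is bounded on a subset of $W_F$: boundedness of $u_1$ and properness of $\psi$ confine each $\pi_{\{g\}}(x)$, $g\in F$, to a finite set, while boundedness of $u_2$ bounds $\eta(\pi_{(g,g')}(x))$ for each pair $\{g,g'\}\subset F$, so properness of $\eta$ confines $\pi_{(g,g')}(x)$, and hence its central coordinate $c_{g,g'}(x)\in H\otimes H$, to a finite set. By the uniqueness of the normal form in Proposition \ref{escritura}, $x$ then ranges over a finite subset of $W_F$, giving properness. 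Note that $G$-invariance forces $u$ to be bounded on the infinite orbit of any single factor element, so $u$ is never proper on all of $\2P{g\in G}H$, precisely as the statement (properness only on the $W_F$) anticipates. Feeding $u$ together with the support gauge into \cite[Theorem 5.1]{CSV2} then yields Theorem C.
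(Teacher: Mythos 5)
Your construction is correct and is essentially the paper's own proof: your pairwise part $u_2$ is exactly the paper's function $u=\sum_{h\neq k}2^{-\Psi(h^{-1}k)}\,\varphi\circ\pi_{(h,k)}$ (the paper sums over \emph{ordered} pairs, so it needs no swap-symmetrization of $\eta$), with the same three pillars---summability from a weight depending only on $g^{-1}g'$, $G$-invariance by reindexing, and properness on $W_F$ via properness of $\eta$ plus uniqueness of the normal form---while your base part $u_1$ is redundant, since the pair projections already control the factor coordinates. One caveat: your intermediate claim that $u_1$ is proper on $W_F$ is false once $|F|\geq 2$ and $H\otimes H$ is infinite (e.g.\ $H=\mathbb{Z}$: all powers of a single commutator $[a_g,a_{g'}]$ lie in $W_F$ and have $u_1=0$, since $u_1$ never sees the central coordinates), but your final properness argument correctly uses $u_2$ for those coordinates and never relies on that claim, so the proof stands.
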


\begin{proof}
Since $H$ is Haagerup, by item (\ref{haag}) of Proposition \ref{StabProp}, $H \stackrel{_2}{\ast} H$ is Haagerup. By definition, this means that there exists a  proper conditionally negative definite function $\varphi:H \stackrel{_2}{\ast} H \to \R_{\geq 0}$. 
For $h, k \in G, h \neq k$,  we have
$$\2P {G} H \xto{\pi_{(h,k)}} H_h \stackrel{_2}{\ast} H_k \xto{\cong} H \stackrel{_2}{\ast} H \xto{\varphi} \R.$$
Denote $v_{(h,k)}:= \varphi\circ \pi_{(h,k)}$.
Let \mbox{$\Psi:G \to \mathbb{N}$} be an enumeration of $G$.\\
{\bf Claim:}
The function $$u= \sum_{\underset{h\neq k}{h,k \in G}}{\frac{1}{2^{\Psi(h^{-1}k)}}\, v_{(h,k)}}$$ satisfies the required conditions.\\

\begin{enumerate}
\item[(i)] $u$ is $G$-invariant: Since $\pi_{(h,k)}(g.x)=\pi_{(g^{-1}h,g^{-1}k)}(x)$ it follows that
$$u(g.x)=\sum_{\underset{h\neq k}{h,k \in G}}{\frac{1}{2^{\Psi(h^{-1}k)}}\, v_{(h,k)}(g.x)}=\sum_{\underset{h\neq k}{h,k \in G}}{\frac{1}{2^{\Psi((g^{-1}h)^{-1}(g^{-1}k))}}\, v_{(g^{-1}h,g^{-1}k)}(x)}=u(x).$$

\item[(ii)]For every fixed $x$, $u(x)$ is finite: First notice that for all $h,k\notin supp(x)$, $v_{(h,k)}(x)=\varphi(e)=0$.
Then
\begin{align}\label{finitesum}
u(x)=&\sum_{\underset{h\neq k}{h,k \in supp(x)}}{\frac{1}{2^{\Psi(h^{-1}k)}}\, v_{(h,k)}}(x)+
\sum_{\substack{h \in supp(x)\\ k\notin supp(x)}}{\frac{1}{2^{\Psi(h^{-1}k)}}\, v_{(h,k)}}(x)+\\
 &\sum_{\substack{h \notin supp(x)\\ k\in supp(x)}}{\frac{1}{2^{\Psi(h^{-1}k)}}\, v_{(h,k)}}(x). \nonumber
\end{align}
Since $supp(x)$ is finite, the first summand in (\ref{finitesum}) is finite. \\
Take $h\in supp(x)$  fixed. Then
for all $k, k' \notin supp(x)$, $\pi_{(h,k)}(x)\in H_h$ and  $\pi_{(h,k^{\prime})}(x)\in H_h$. 
Hence, 
$\pi_{(h,k)}(x)= \pi_{(h,k^{\prime})}(x)$, 
and then $v_{(h,k)}(x)=v_{(h,k')}(x)$.
It follows that the sum
 $$\sum_{k\notin supp(x)}{\frac{1}{2^{\Psi(h^{-1}k)}}\, v_{(h,k)}}(x)$$
 is convergent  for all $h\in supp(x)$. Hence, the second summand in (\ref{finitesum}) is finite.
The same method shows that the third summand in (\ref{finitesum}) is finite.

\item [(iii)] Restrictions are proper:
Fix a finite subset $F \subset G$.
 Let
$N=\underset{h,k \in F}{\max} \hspace{2pt} \Psi(h^{-1}k)$.
Let $a, b \in F$, $a \neq b$. Then for any $x\in \HP H$  we have the inequalities
$$ v_{(a,b)}(x) \leq \sum_{\underset{h\neq k}{h,k \in F}} v_{(h,k)}(x) \leq 2^N u(x).$$
This means that the set 
$$\{x \in \HP H : supp(x) \subset F, u(x) \leq M \}$$
is contained in
$$\{x \in \HP H : supp(x) \subset F,  v_{(a,b)}(x) \leq 2^N M \text{ for all } a,b\in F \}.$$
This set is finite since $\varphi$ is proper and for all $x\neq x'$ whose supports are contained in $F$ there exists $a,b\in F$ such that $\pi_{(a,b)}(x)\neq\pi_{(a,b)}(x')$.

\item[(iv)] $u$ is a conditionally negative definite function (c.n.d.f.): This is obvious since the set of c.n.d.f. is a convex cone, and pointwise limit of c.n.d.f. is a c.n.d.f. (see, for instance, \cite[Proposition C.2.4]{BDV}.)
\end{enumerate}
\end{proof}

\begin{remark}
The case when $H$ is finite and $G=\mathbb F_2$ could be shown by mimicking the proof given in \cite{CSV1} for the lamplighter group.
This alternative approach has the advantage of being self contained since it only requires to
transfer the {\it space with walls} structure from $\mathbb F_2$ to $(\2P {\mathbb F_2} H) \rtimes \mathbb F_2$.
\end{remark}

\begin{proof}[Proof of Theorem D]
If $A$ is abelian and $G$ is amenable, Corollary \ref{amenability of many products} implies that
the restricted second nilpotent wreath product of $A$ and $G$ is amenable and thus unitarizable.
To show the converse let $\{g_i: g_i\in G\}_{i\in\mathbb N}$ be an enumeration of $G$.
Since
$C=\bigoplus_{ i< j} [A_{g_i},A_{g{_j}}]^{(2)}$
is a $G$-invariant central subgroup of $\2P {G} A$, then $\tilde C=(C,e_G)$ is a normal subgroup of $\Big(\2P {G} A \Big) \rtimes G$. Their quotient is:
$$\Big(\2P {G} A\Big) \rtimes G\Big/\tilde C\simeq\Big(\2P{G} A\,\big/ C\Big) \rtimes G\simeq\Big(\bigoplus_{G} A\Big)\rtimes G.$$
    When $G$ is non-amenable,   \cite[Theorem 1]{MoOz} says that $\Big(\bigoplus_{G} A\Big)\rtimes G$ is non unitarizable. Since a quotient of a unitarizable group must be unitarizable, it follows that $\Big(\2P {G} A\Big) \rtimes G$ is non unitarizable.
\end{proof}

\textbf{Acknowledgments.}This work was partially supported by the grant PIP-CONICET 11220130100073CO. We thank the anonymous referee for her or his 
detailed reading of the manuscript and for the many suggestions that certainly helped to improve the exposition of this article. 
\bibliographystyle{amsplain}

\bibliography{dosnilpotente}

\end{document}